\newcommand{\PZI}{{\frac{\partial}{\partial z^i}}}
\newcommand{\PZIU}{{\frac{\partial u}{\partial z^i}}}
\newcommand{\PBZJ}{{\frac{\partial}{\partial\bar{z}^j}}}
\newcommand{\SF}{{\mathscr{F}}}
\newcommand{\SFPR}{{\mathscr{F}_{r_{i\bar{j}}}}}
\newtheorem{thm}{Theorem}
\newtheorem{prop}{Proposition}
\newtheorem{defn}{Definition}
\newtheorem{rem}{Remark}
\newtheorem{lem}{Lemma}
\begin{document}
\title{A Poho\v{z}aev identity and critical exponents of some complex Hessian equations}
\author{Chi Li}

\maketitle

\begin{abstract}
In this note, we prove some non-existence results
for Dirichlet problems of complex Hessian equations. The non-existence results are proved using the Poho\v{z}aev method. We also prove existence results for radially symmetric solutions. The main difference of the complex case with the real case is that we don't know if a priori radially symmetric property holds in the complex case.
\end{abstract}
 
\section{Introduction}

In \cite{Tso1}, Tso considered the following real k-Hessian equation:
\begin{equation}\label{rhess}
S_k(u_{\alpha\beta})=(-u)^p \mbox{ on } \Omega, \quad u=0 \mbox{ on } \partial\Omega.  
\end{equation}
Tso proved the following result
\begin{thm}[\cite{Tso1}]
Let $\Omega$ be a ball and $\tilde{\gamma}(k,d)=\left\{\begin{array}{cc}\frac{(d+2)k}{d-2k}& 1\le k<\frac{d}{2}\\ \infty&\frac{d}{2}\le k<d\end{array}\right.$ Then
(i) \eqref{rhess} has no negative solution in $C^1(\bar{\Omega})\cap C^4(\Omega)$ when $p\ge \tilde{\gamma}(k,d)$; (ii) It admits a negative solution which is radially symmetric and is in $C^2(\bar{\Omega})$ when $0<p<\tilde{\gamma}(k,d)$, $p$ is not equal to $k$.
\end{thm}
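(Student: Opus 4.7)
I would multiply the equation $S_k(u_{\alpha\beta})=(-u)^p$ by $x\cdot\nabla u$ and integrate over $\Omega=B_R$. Integration by parts (using $u=0$ on $\partial\Omega$) turns the right-hand side into $\tfrac{d}{p+1}\int_\Omega(-u)^{p+1}\,dx$. For the left-hand side, the key tools are the Euler identity $kS_k=T_k^{\alpha\beta}u_{\alpha\beta}$, where $T_k^{\alpha\beta}=\partial S_k/\partial u_{\alpha\beta}$ is the $k$-th Newton tensor, together with Reilly's divergence-free identity $\partial_\alpha T_k^{\alpha\beta}=0$. Integrating by parts twice and tracking the boundary contribution (where $\nabla u=u_\nu\nu$ and $x\cdot\nu=R$), then substituting $\int(-u)S_k=\int(-u)^{p+1}$ from the equation itself, leads to a Pohožaev identity of the schematic shape
$$\Bigl(\tfrac{d-2k}{k+1}-\tfrac{d}{p+1}\Bigr)\int_\Omega(-u)^{p+1}\,dx \;=\; c_{k,d}\,R\int_{\partial\Omega}|u_\nu|^{k+1}\,dS,\qquad c_{k,d}>0.$$
The bracketed coefficient vanishes precisely at $p=\tilde\gamma(k,d)=(d+2)k/(d-2k)$ and becomes negative for $p>\tilde\gamma(k,d)$, so for $p\ge\tilde\gamma(k,d)$ both sides of the identity must vanish. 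This forces $u_\nu\equiv 0$ on $\partial\Omega$, which together with $u|_{\partial\Omega}=0$ and a Hopf-type boundary point lemma for $k$-admissible functions yields $u\equiv 0$, contradicting $u<0$.

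\textbf{Strategy for part (ii) (existence via ODE).}
For radial $u=u(r)$ the Hessian $u_{\alpha\beta}$ has eigenvalues $u''(r)$ (multiplicity $1$) and $u'(r)/r$ (multiplicity $d-1$), and using the ratio $\binom{d-1}{k}/\binom{d-1}{k-1}=(d-k)/k$ a direct computation gives the divergence form
$$S_k(u_{\alpha\beta})=\frac{1}{k}\binom{d-1}{k-1}\frac{1}{r^{d-1}}\bigl(r^{d-k}(u')^k\bigr)'.$$
Hence \eqref{rhess} reduces to an ODE with boundary conditions $u'(0)=0$ and $u(R)=0$, which I would solve by a shooting method: for each $a>0$ integrate forward the IVP with $u(0)=-a$, $u'(0)=0$. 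From the ODE one checks that $u'\ge 0$ (so both eigenvalues are nonnegative and $u$ is $k$-admissible), and the solution reaches $u=0$ at some finite $\rho(a)$. The rescaling $u_a(r)=a\,v\bigl(a^{(p-k)/(2k)}r\bigr)$, which is well-defined precisely when $p\ne k$, reduces the one-parameter family to a single profile with $\rho(a)=\rho_0\,a^{-(p-k)/(2k)}$, and this function sweeps $(0,\infty)$ monotonically. Thus for every $R>0$ there is an $a$ with $\rho(a)=R$, producing the desired solution; the subcriticality $p<\tilde\gamma(k,d)$ is what ensures the shooting profile $v$ exits at a finite radius $\rho_0$ rather than decaying too slowly to meet the boundary condition.

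\textbf{Main obstacle.} The computational heart of (i) is the careful derivation of the Pohožaev identity: the key input is the divergence-free Newton tensor, and the delicate step is pinning down the correct numerical coefficient on the bulk term, since that coefficient is what singles out $\tilde\gamma(k,d)$ (consistent with the scaling heuristic that $\int(-u)^{p+1}$ and $\int(-u)S_k$ transform identically under $u(x)\mapsto\lambda^\alpha u(\lambda x)$ exactly at the critical exponent). The conceptual subtlety in (ii) is the exclusion $p=k$, which is the $k$-Hessian \emph{eigenvalue} case: there the scaling $u\mapsto\lambda u$ leaves the ODE invariant, so $\rho(a)$ is independent of $a$ and the Dirichlet problem is solvable only for one specific radius, not for a generic ball.
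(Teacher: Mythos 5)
Note first that the paper does not reprove this theorem: it is quoted from \cite{Tso1}, and what the paper proves is the complex analogue (Theorem \ref{critCHE}) by exactly the two-part scheme relevant here, namely a Poho\v{z}aev identity for non-existence and a \emph{variational} argument for existence. Your part (i) is essentially the paper's (and Tso's) approach: multiply by $x\cdot\nabla u$, use $kS_k=T_{k-1}^{\alpha\beta}u_{\alpha\beta}$ and the divergence-free Newton tensor, and arrive at the real counterpart of \eqref{PohIdentity2}, with the bulk coefficient vanishing exactly at $\tilde\gamma(k,d)$. Two small caveats: for $p>\tilde\gamma(k,d)$ the bulk coefficient is strictly negative, so $\int(-u)^{p+1}=0$ directly and no boundary-point lemma is needed; at $p=\tilde\gamma(k,d)$, rather than invoking a Hopf-type lemma for $k$-admissible functions (which is itself not free for the degenerate operator $S_k$), the cleaner finish once $u_\nu\equiv0$ on $\partial\Omega$ is to integrate the equation and use the divergence form $kS_k=\partial_\alpha\bigl(T_{k-1}^{\alpha\beta}u_\beta\bigr)$, which writes $\int_\Omega(-u)^p$ as a boundary integral of $|u_\nu|^k$ times a curvature factor, hence zero. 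Also the hypothesis is only $u\in C^1(\bar\Omega)\cap C^4(\Omega)$, so the integrations by parts should be done on interior balls and passed to the limit.

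In part (ii) you genuinely diverge from the paper and from Tso, who obtain existence variationally: coercive minimization of $\mathbb{F}_k$ on the radial Sobolev-type space when $p<k$, and the mountain-pass theorem when $k<p<\tilde\gamma(k,d)$, together with a regularity lemma upgrading weak radial solutions to $C^2$. Your shooting/scaling scheme is a legitimate alternative, and the scaling bookkeeping is correct ($\lambda=a^{(p-k)/(2k)}$, $\rho(a)=\rho_0\,a^{(k-p)/(2k)}$, degenerating precisely at $p=k$), but it contains a genuine gap: the assertion that the normalized shooting profile reaches $u=0$ at a \emph{finite} radius $\rho_0$. For $d\le 2k$ (where $\tilde\gamma=\infty$) this follows from a crude lower bound on $u'$, but in the range $1\le k<d/2$, $k<p<\tilde\gamma(k,d)$ it is exactly the hard step: for $p\ge\tilde\gamma$ the profile stays negative for all $r$ (as your own part (i) combined with scaling shows), so finiteness of $\rho_0$ must use subcriticality quantitatively --- typically via an energy or Poho\v{z}aev-type identity on balls $B_r$, or an Emden--Fowler change of variables --- and you only assert it. In the variational route this is precisely where subcriticality enters, through the compact embedding and the Palais--Smale condition. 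So either supply a crossing lemma for the radial $k$-Hessian initial value problem, or fall back on the variational argument the paper uses for its complex analogue.
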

The non-existence result above was proved by the Poho\v{z}aev method.  In this article, we first generalize Tso's result to case of complex k-Hessian equation. From now on, let $B_R$ be the ball of radius $R$ in $\mathbb{C}^n$. We first consider the following equation
\begin{equation}\label{chess}
S_k(u_{i\bar{j}})=(-u)^p \mbox{ on } B_R, \quad u=0 \mbox{ on } \partial B_R.
\end{equation}
where the complex k-Hessian operator is defined as 
\[
S_k(u_{i\bar{j}})=\frac{1}{k!}\sum_{1\le i_1, \dots, j_k\le n}\delta^{i_1\dots i_k}_{j_1\dots j_k}u_{i_1\bar{j}_1} \dots u_{i_k\bar{j}_k}.
\]
Our first result is
\begin{thm}\label{critCHE}
Define $\gamma(k,n)=\frac{(n+1)k}{n-k}=\tilde{\gamma}(k,2n)$. Then (i) \eqref{chess} has no nontrivial nonpositive solution in $C^2(\bar{B}_R)\cap C^4(B_R)$ when $p\ge \gamma(k,n)$; (ii) It admits a negative solution which is 
radially symmetric and is in $C^2(\bar{B}_R)$ when $0<p<\gamma(k,n)$ and $p$ is not equal to $k$.
\end{thm}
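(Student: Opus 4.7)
My plan is to treat the two halves of the theorem separately: a Pohozaev-type integral identity for the non-existence claim (i), and a reduction to a radial ODE for the existence claim (ii).

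For part (i), write $T^{i\bar j} = \partial S_k/\partial u_{i\bar j}$ for the linearization of the operator. Two algebraic facts, each verifiable directly from the combinatorial formula, are central: the Euler homogeneity identity $k\, S_k = T^{i\bar j} u_{i\bar j}$, and the divergence-free property $\partial_i T^{i\bar j} = 0 = \partial_{\bar j} T^{i\bar j}$. I would test the equation against the radial derivative $F := z^l u_l + \bar z^l u_{\bar l} = r\,\partial_r u$. Repeated integration by parts, together with the boundary relation $u_i = \bar z^i u_r/(2R)$ on $\partial B_R$ (coming from $u|_{\partial B_R}=0$), should yield a Pohozaev identity of the schematic form
\begin{equation*}
\int_{B_R} S_k(u_{i\bar j})\, r\,\partial_r u\, dV \;=\; -\frac{2(n-k)}{k+1}\int_{B_R}(-u)\,S_k(u_{i\bar j})\,dV \;+\; \mathcal{B}_R(u),
\end{equation*}
where the boundary contribution $\mathcal{B}_R(u) = C(n,k)\,R\int_{\partial B_R}|\partial_\nu u|^{k+1}\,dS\geq 0$ is manifestly non-negative because $\partial B_R$ is star-shaped about the origin. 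Substituting $S_k = (-u)^p$ converts both volume integrals into multiples of $\int_{B_R}(-u)^{p+1}dV$ (the LHS after one radial integration by parts), producing
\begin{equation*}
\Bigl[\tfrac{2n}{p+1}-\tfrac{2(n-k)}{k+1}\Bigr]\int_{B_R}(-u)^{p+1}dV \;=\; \mathcal{B}_R(u)\;\geq\;0.
\end{equation*}
The bracket vanishes exactly at $p=\gamma(k,n)$ and becomes negative for $p>\gamma(k,n)$. Hence for $p>\gamma(k,n)$ both sides must vanish and $u\equiv 0$; for the critical value $p=\gamma(k,n)$ one first deduces $\partial_\nu u\equiv 0$ on $\partial B_R$, which together with $u|_{\partial B_R}=0$ and Hopf's boundary point lemma (or a Cauchy-uniqueness argument for the degenerate elliptic equation) again forces $u\equiv 0$.

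For part (ii), I reduce \eqref{chess} to an ODE via a radial ansatz. Since $(u_{i\bar j})$ has eigenvalues $u'(r)/(2r)$ with multiplicity $n-1$ and $u''(r)/4+u'(r)/(4r)$ with multiplicity $1$, a direct computation yields the divergence form
\begin{equation*}
S_k(u_{i\bar j}) \;=\; \frac{\binom{n-1}{k-1}}{2^{k+1}k}\cdot\frac{1}{r^{2n-1}}\frac{d}{dr}\bigl[r^{2n-k}(u'(r))^k\bigr],
\end{equation*}
so that \eqref{chess} becomes a second-order ODE on $[0,R]$ with $u'(0)=0$ and $u(R)=0$. Integrating once gives $(u'(r))^k = C\,r^{k-2n}\int_0^r s^{2n-1}(-u(s))^p\,ds$, and I would solve the boundary-value problem by a shooting argument: starting from $u(0)=a<0$, continuous dependence on $a$ produces a first zero $R(a)$, and the goal is to show the map $a\mapsto R(a)$ sweeps out all of $(0,\infty)$. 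The subcritical assumption $p<\gamma(k,n)$ provides the scaling control that rules out finite-time blow-up before a zero is reached, and the restriction $p\ne k$ excludes the degenerate homogeneous (eigenvalue) case in which the scaling invariance makes the shooting parameter effective. Regularity at $r=0$ follows from standard removable-singularity analysis on the ODE.

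The main obstacle is the clean derivation of the Pohozaev identity in step (i) with the correct, sign-definite boundary term. In the complex setting, differentiating $F$ introduces holomorphic Hessian terms $u_{lj}$ that are not directly controlled by $T^{i\bar j}$, and must be eliminated through repeated use of the Hermitian symmetry of $T^{i\bar j}$ together with both divergences $\partial_i T^{i\bar j}=\partial_{\bar j}T^{i\bar j}=0$; the positivity of $\mathcal{B}_R(u)$ then emerges after unpacking $F$ and the induced boundary data using the specific geometry of $\partial B_R$.
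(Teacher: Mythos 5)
Your part (i) is essentially the paper's own route: a Pucci--Serrin-type multiplier identity (the paper uses the combination $c_0u+z^qu_q$ with $c_0=\frac{n-k}{k+1}$) integrated over $B_R$, and your final display agrees, up to an overall factor of $k+1$, with the paper's identity \eqref{PohIdentity2} specialized to $f=(-u)^p$. Two caveats. First, the step you label ``the main obstacle'' is exactly the content of the paper's key lemma: on $\partial B_R$ one takes $\rho=u/|\nabla u|$ as a defining function, so that $T_{k-1}(u_{p\bar q})^{i\bar j}\nu_i\nu_{\bar j}=|\nabla u|^{k-1}\tilde S_{k-1}(\partial B_R)$ and the boundary term becomes $\langle x,\nu\rangle\,\tilde S_{k-1}(\partial B_R)\,|\nabla u|^{k+1}$; its nonnegativity uses not only star-shapedness ($\langle x,\nu\rangle>0$) but also the positivity of the Levi-form quantity $\tilde S_{k-1}$ (strong $k$-pseudoconvexity of the sphere), not star-shapedness alone. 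Second, at the critical exponent $p=\gamma(k,n)$ your appeal to Hopf's lemma is delicate, since $S_k$ is only degenerate elliptic off the admissible cone; the clean finish is to integrate the equation and use the divergence structure, $\int_{B_R}(-u)^p\,dV=\frac1k\oint_{\partial B_R}\tilde S_{k-1}|\nabla u|^k\,d\sigma=0$ once $\nabla u\equiv0$ on $\partial B_R$, which forces $u\equiv0$.

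Part (ii) has a genuine gap. You replace the paper's variational argument (radial reduction to $\frac1k\binom{n-1}{k-1}(u_s^ks^n)_ss^{1-n}=(-u)^p$ with $s=r^2$, a weighted Sobolev inequality on the space $\mathcal W_k$, Tso-type regularity of weak solutions, then direct minimization for $0<p<k$ and the mountain pass theorem for $k<p<\gamma(k,n)$) by a shooting scheme, but the pivotal claim --- that the solution of the initial value problem $u(0)=a<0$, $u'(0)=0$ reaches zero at a \emph{finite} radius whenever $p<\gamma(k,n)$ --- is asserted rather than proved, and the reason you give misidentifies the difficulty. There is no finite-time blow-up to rule out: the IVP solution exists and increases monotonically toward $0$; the danger is that it never attains $0$, which is precisely what happens at and above the critical exponent (for $k=1$ the critical solution is the everywhere-negative Aubin--Talenti-type bubble). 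Establishing finiteness of the first zero in the subcritical range is where the exponent restriction must actually enter, typically via a Pohozaev/energy monotonicity argument along the trajectory, and your proposal contains no such argument. Moreover, the sublinear range $0<p<k$ (where the scaling $\lambda^{2k}=\mu^{p-k}$ works in the opposite direction) is not addressed separately, and the role of $p\neq k$ is only to let scaling normalize the radius, so ``sweeping out all of $(0,\infty)$'' is not the real issue. As written, part (ii) is an outline of a different (shooting) strategy whose decisive step is missing; the paper's variational route circumvents it entirely.
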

\begin{rem}
By scaling, we get solution to $S_k(u_{i\bar{j}})=\lambda (-u)^p$ for any $\lambda>0$ if $p$ satisfies the restrictions. When $p=k$, we are in the eigenvalue problem, as in the real Hessian case (\cite{Wan}), one should be able to show that there exists a $\lambda_1>0$ such that there is a nontrivial nonpositive solution to the equation: $S_k(u_{i\bar{j}})=\lambda_1(-u)^k$. Moreover, the solution is unique up to scaling.  This will be discussed elsewhere.
\end{rem}
\begin{rem}
By \cite{GNN}, \cite{Del}, the solution to \eqref{rhess} is a priori radially symmetric. However, it's not known if all the solution to \eqref{chess} are radially symmetric. The classical moving plane method for proving radial symmetry works for large classes real elliptic equations but doesn't seem to work in the complex case (cf. \cite{Del}). For the recent study of complex Hessian equations, see \cite{Blo}, \cite{Lisy}, \cite{Zhou} and the reference therein.
\end{rem}

Next we use Poho\v{z}aev method to prove a non-existence result for the following equation:
\begin{equation}\label{cHLBG}
S_k(u_{l\bar{m}})=a\frac{e^{-u}}{\int_{B_1}e^{-u}dV} \mbox{ on } B_1 , \quad u=0 \mbox{ on } \partial B_1.
\end{equation}
Note that when $k=n$, we have a Monge-Amp\`{e}re equation:
\begin{equation}\label{cMALBG}
\det(u_{l\bar{m}})=a\frac{e^{-u}}{\int_{B_1}e^{-u}dV} \mbox{ on } B_1 , \quad u=0 \mbox{ on } \partial B_1.
\end{equation}
Since the domain we consider is the unit ball, there are natural solutions to \eqref{cMALBG} coming from potential of Fubini-Study metric on $\mathbb{P}^n$:
\begin{equation}\label{FSsol}
u_{\epsilon}=(n+1)[\log(|z|^2+\epsilon^2)-\log (1+\epsilon^2)],
\end{equation}
with the parameter $a$ in \eqref{cMALBG} being
\begin{equation}\label{aepsilon}
a_\epsilon=(n+1)^n\epsilon^2 \int_0^1\frac{r^{2n-1}dr}{(r^2+\epsilon^2)^{n+1}}\omega_{2n-1}=(n+1)^n\omega_{2n-1}\int_0^{1/\epsilon}\frac{t^{2n-1}dt}{(1+t^2)^{n+1}}=(n+1)^n\frac{\omega_{2n-1}}{2(1+\epsilon^2)^n}.
\end{equation}
where we will use $\omega_{d-1}=\frac{2\pi^{d/2}}{\Gamma(d/2)}$ to denote the volume of the (d-1)-dimensional unit sphere $S^{d-1}$. In particular $\omega_{2n-1}=\frac{2\pi^n}{(n-1)!}$. So we get that when 
\begin{equation}\label{critLBG}
0<a<a_{0}=(n+1)^n\frac{\pi^n}{n!},
\end{equation}
there exists radially symmetric solutions for \eqref{cMALBG}.  Again it's an open question (\cite{Del}, \cite{BeBe}) whether all
solutions to \eqref{cMALBG} are a priori radially symmetric, which would imply \eqref{FSsol} gives all the solutions to \eqref{cMALBG}.  Without a priori
radially symmetric properties, we can still use Poho\v{z}aev method to get
\begin{thm}\label{nonlocalthm}
For the Dirichlet problem \eqref{cHLBG}, there exists $\alpha(k,n)>0$ such that there exists no solution to \eqref{cHLBG} in $C^2(\bar{B}_1)\cap C^4(B_1)$ when $a>\alpha(k,n)$. Moreover, when $k=n$, we can make $\alpha(n,n)=a_0=(n+1)^n\frac{\pi^n}{n!}$ and  \eqref{cMALBG} has no solution in $C^2(\bar{B}_1)\cap C^4(B_1)$ if $a\ge a_0$. In other words, the $a_0$ in \eqref{critLBG} is sharp and can not be obtained, at least for solutions with enough regularity.
\end{thm}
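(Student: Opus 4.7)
The plan is to apply the Poho\v{z}aev method behind Theorem~\ref{critCHE} to the non-local right-hand side of \eqref{cHLBG}. Multiplying \eqref{cHLBG} by the radial derivative
\[
Zu := \sum_{i=1}^n\bigl(z^i u_i+\bar z^i u_{\bar i}\bigr) = r\,\partial_r u
\]
and integrating over $B_1$, the left-hand side is handled by the same two integrations by parts that drive Theorem~\ref{critCHE} (exploiting the divergence-free Newton tensor $S_k^{i\bar j}:=\partial S_k/\partial u_{i\bar j}$), producing
\[
\int_{B_1}S_k(u_{i\bar j})\,Zu\,dV = -\,\frac{2(n-k)}{k+1}\int_{B_1}u\,S_k(u_{i\bar j})\,dV + c_{n,k}\int_{\partial B_1}(\partial_\nu u)^{k+1}\,dA,
\]
for an explicit positive constant $c_{n,k}$ (one finds $c_{n,n}=1/(2^{n+1}(n+1))$ by checking against the Fubini--Study family \eqref{FSsol}). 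The boundary integral is nonnegative because $u\le 0$ with $u|_{\partial B_1}=0$ force $\nabla u\parallel\nu$ and $\partial_\nu u\ge 0$ on $\partial B_1$.

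On the right-hand side, substituting $S_k=ae^{-u}/C$ (with $C:=\int_{B_1}e^{-u}\,dV$) and using $e^{-u}\,Zu=-Z(e^{-u})$ together with $\operatorname{div}Z=2n$, $Z\cdot\nu=1$ on $\partial B_1$, and $u|_{\partial B_1}=0$ collapses the integral to
\[
\int_{B_1}Zu\cdot\frac{a e^{-u}}{C}\,dV = \frac{a}{C}\bigl(2nC-\omega_{2n-1}\bigr) = 2na - \frac{a\,\omega_{2n-1}}{C}.
\]
Equating the two sides yields the master Poho\v{z}aev identity
\[
c_{n,k}\int_{\partial B_1}(\partial_\nu u)^{k+1}\,dA = 2na - \frac{a\,\omega_{2n-1}}{C} + \frac{2(n-k)\,a}{(k+1)\,C}\int_{B_1}u\,e^{-u}\,dV,
\]
whose last term is nonpositive (since $u\le 0$).

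To obtain non-existence above some $\alpha(k,n)$, I would combine this identity with a lower bound on the boundary integral coming from Maclaurin's inequality $\binom{n}{k}^{1/k}S_k(u_{i\bar j})^{1/k}\le S_1(u_{i\bar j})=\tfrac14\Delta u$, the equation, the divergence identity $\int_{\partial B_1}\partial_\nu u\,dA = \int_{B_1}\Delta u\,dV$, and Jensen's inequality for $t\mapsto t^{k+1}$ on the sphere. Together with an upper bound on $C$ from pluripotential estimates (Alexandrov-type $\|u\|_\infty^n\lesssim\int S_n\,dV$ when $k=n$, or analogous $L^p$ control for $k<n$), this traps $a$ below an explicit threshold $\alpha(k,n)$, giving non-existence when $a>\alpha(k,n)$.

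For the sharp $k=n$ case the $(n-k)$-factor kills the bulk term, so the identity becomes simply $c_{n,n}\int_{\partial B_1}(\partial_\nu u)^{n+1}\,dA = 2na - a\omega_{2n-1}/C$. The Fubini--Study family \eqref{FSsol} saturates each link in the Maclaurin/Jensen chain as $\epsilon\to 0$ (with $C_\epsilon\to\infty$ and $a_\epsilon\to a_0$, so that $a\omega_{2n-1}/C\to 0$ and the boundary integral tends to $2na_0/c_{n,n}$), so tracking the constants sharply pins the threshold down to exactly $a_0=(n+1)^n\pi^n/n!$; strictness of the non-extremal inequalities for any genuine $C^2(\bar B_1)\cap C^4(B_1)$ solution, as opposed to the singular $\epsilon=0$ limit, excludes $a=a_0$ as well. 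The main obstacle, I expect, is precisely this sharp identification for $k=n$: one must verify that the Maclaurin/Jensen/Alexandrov chain has the Fubini--Study family as its unique limiting extremizer, so that the resulting threshold is exactly $a_0$ and not some larger value --- the pluripotential analogue of locating the best constant in a Moser--Trudinger-type inequality.
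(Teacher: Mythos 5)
Your first half is sound: the identity you derive by pairing the equation with $Zu$ is exactly the paper's Poho\v{z}aev identity \eqref{PohIdentity3} specialized to the ball and to $f(u)=ae^{-u}/C$ (with $c_{n,k}=\tilde{S}_{k-1}(\partial B_1)/(k+1)$, and indeed $c_{n,n}=2^{-(n+1)}/(n+1)$), and computing the right-hand side through $Z(e^{-u})$ is equivalent to using the primitive $F$. The genuine gap is in the second ingredient, the lower bound on the boundary term. The paper gets it from a step your sketch never uses: integrating the equation itself over $B_1$ and exploiting the divergence-free Newton tensor (Lemma \ref{Newton}) gives the \emph{exact} flux identity $\frac1k\oint_{\partial B_1}\tilde{S}_{k-1}|\nabla u|^{k}\,d\sigma=a$, and then H\"older on the boundary yields
$\oint_{\partial B_1}\tilde{S}_{k-1}|\nabla u|^{k+1}\,d\sigma\ \ge\ (ka)^{(k+1)/k}\bigl(\tilde{S}_{k-1}\,\omega_{2n-1}\bigr)^{-1/k}$,
a pure power of $a$ with no dependence on $u$ or on $C=\int e^{-u}$. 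Feeding this into \eqref{PohIdentity3}, the factor $\int e^{-u}$ cancels from both sides after discarding the nonpositive term $(n-k)ue^{-u}$ and using $e^{-u}-1<e^{-u}$ (the strictness is precisely what excludes $a=a_0$ when $k=n$), giving $a<\bigl(\tfrac{n(k+1)}{k}\bigr)^k\binom{n}{k}\tfrac{\pi^n}{n!}$, which equals $a_0$ for $k=n$.

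Your proposed substitute (Maclaurin plus $\oint\partial_\nu u=\int\Delta u$ plus Jensen plus an a priori bound on $C$) cannot deliver this. Maclaurin converts the equation into $\int\Delta u\gtrsim (a/C)^{1/k}\int e^{-u/k}$, and there is no inequality in the needed direction between $\int e^{-u/k}$ and $C^{1/k}$, so a factor $C^{-1/k}$ survives; this is fatal because $C$ is not usefully controlled: along the Fubini--Study family \eqref{FSsol} one has $C_\epsilon\to\infty$ while $a_\epsilon<a_0$ stays bounded, and the Alexandrov-type bound $\|u\|_\infty\lesssim a^{1/n}$ only gives $C\lesssim e^{ca^{1/n}}$, so your chain produces a lower bound of order $a^{(k+1)/k}e^{-c'a^{1/n}}$, which for large $a$ falls below the linear upper bound $\approx 2na$ coming from the identity --- no contradiction for any $a$, sharp or otherwise. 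The saturation claim is also false: for \eqref{FSsol} the eigenvalues of $u_{i\bar j}$ are $(n+1)/(s+\epsilon^2)$ (multiplicity $n-1$) and $(n+1)\epsilon^2/(s+\epsilon^2)^2$, wildly unequal near $\partial B_1$, and in fact $\int_{B_1}S_n^{1/n}dV=O(\epsilon^{2/n})\to0$ while $\oint\partial_\nu u$ stays bounded away from zero, so the Maclaurin link degenerates rather than saturates and cannot identify $a_0$. Replacing that block by the flux-plus-H\"older step (the paper's \eqref{starlowerbound1}, with $\tilde{S}_{k-1}$ from \eqref{tildeSball}) turns your outline into the paper's proof.
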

\begin{rem}
\eqref{cMALBG} was a local version of K\"{a}hler-Einstein metric equation. It was extensively studied in \cite{BeBe} for even general hyperconvex domains. Note that, the normalization here differs from that in \cite{BeBe} by a factor of $\pi^n/n!$. Berman-Berndtsson proved that equation \eqref{cHLBG} has a solution when $a<a_{0}$ on any hyperconvex domain which actually is a global minimizer of a functional associated to Moser-Trudinger-Onofri inequality. However, it's not known if there are solutions when $a\ge a_{0}$. Our observation is that, 
the Poho\v{z}aev method used in \cite{CLMP} for Laplace equation can be generalized and gives nonexistence results for star-shaped and (strongly) k-pseudoconvex domains. For simplicity we restrict to the ball to state our result. Note that, when $n=1$, $a_0=2\pi$. Since $\Delta=4 u_{z\bar{z}}$ on the complex plane, this is the well known result for Laplace equation of type \eqref{cHLBG} (\cite{CLMP}).
\end{rem}
In the last part, we will restrict ourselves to radially symmetric solutions. Radial symmetry reduces the equation \eqref{cHLBG} to the following equation. 
\begin{equation}\label{radreduction}
(u_s^ks^n)_ss^{1-n}=A(k,n)^{-1} \frac{a e^{-u}}{\int_0^1 e^{-u(s)}s^{n-1}ds}, \quad u=0 \mbox{ on } \partial B_1.\quad A(k,n)=\frac{\omega_{2n-1}}{2k}\binom{n-1}{k-1}.
\end{equation}
See equation \eqref{radLBGEQ}. Using phase plane method, we will prove the following result.
\begin{thm}\label{RadExist}
Define $\beta(k,n)=k^{k-1}\binom{n-1}{k-1}\frac{\pi^n}{(n-1)!}$. We have the following description of solutions of \eqref{radreduction}, or equivalently the radially symmetric solutions
of \eqref{cHLBG}.
\begin{enumerate}
\item
There exists $\alpha^*(k,n)$ such that 
\begin{enumerate}
\item $k<n$, \eqref{radreduction} admits a solution if and only if $a\le \alpha^*(k,n)$. Moreover, $\alpha^*(k,n)=\beta(k,n)$ if $n-k\ge 4$.
\item
When $k=n$, \eqref{radreduction} admits a solution if and only if $a<\alpha^*(n,n)=(n+1)^n\frac{\pi^n}{n!}$.
\end{enumerate} 
\item
$0<n-k<4$. The solutions to \eqref{radreduction} are unique for small $a>0$. When $a=\beta(k,n)$, there exist infinitely many 
solutions for \eqref{radreduction}. When $\alpha^*(k,n)\ge a\neq \beta(k,n)$, there exists finitely many solutions to \eqref{radreduction}. Moreover, the number of solutions
tends to infinity as $a$ approaches $\beta(k,n)$.
\item
When $n=k$ or $n-k\ge 4$. For every $a>0$, there exists at most one solution of \eqref{radreduction}. 
\end{enumerate}
\end{thm}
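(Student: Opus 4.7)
The plan is to rewrite the nonlocal problem as an autonomous planar system, and to read off the three cases from its phase portrait. Set $M = -u(0) \ge 0$ and $v = u + M$, so that $v(0) = 0$, $v(1) = M$. The $e^{M}$ factors cancel on the right of \eqref{radreduction} and the equation becomes
\[
(v_s^k s^n)_s = \mu\, s^{n-1} e^{-v}, \qquad \mu := \frac{a}{A(k,n)\int_0^1 e^{-v(s)} s^{n-1}\,ds}.
\]
This local equation is invariant under $v(s)\mapsto v(\lambda s)$, $\mu\mapsto \lambda^k\mu$. Let $v_1$ be the unique maximal solution with $v_1(0) = 0$ at $\mu = 1$ (the initial slope $v_{1,s}(0) = n^{-1/k}$ is forced by local analysis of the equation); then every admissible $v$ has the form $v_\mu(s) = v_1(\tau s)$ with $\tau = \mu^{1/k}$. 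Substituting into the defining relation for $\mu$, the problem reduces to finding $\tau$ in the maximal existence interval of $v_1$ with
\[
a = F(\tau) := A(k,n)\,\tau^{k-n}\int_0^\tau e^{-v_1(\sigma)}\sigma^{n-1}\,d\sigma,
\]
and recovering $M = v_1(\tau)$.

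\textbf{Phase plane.} Put $t = \log s$, $X(t) = s\,v_{1,s}(s)$, $Y(t) = s^k e^{-v_1(s)}$. The ODE converts to the autonomous system
\[
\dot X = \frac{Y - (n-k) X^k}{k X^{k-1}}, \qquad \dot Y = Y(k - X),
\]
whose orbit leaves the singular point $(0,0)$ tangent to the curve $Y = n X^k$. For $k < n$ the only interior critical point is $P^* = (k, (n-k) k^k)$; the linearization has trace $-(n-k)$, determinant $n-k$, and discriminant $(n-k)(n-k-4)$. Thus $P^*$ is a stable node when $n-k \ge 4$ and a stable spiral when $0 < n-k < 4$. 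For $k = n$ the system is degenerate on $\{Y = 0\}$; integrating $dY/dX = nX^{n-1}(n-X)$ through the origin yields the explicit first integral $Y = \tfrac{n}{n+1}\,X^n(n+1-X)$, along which the orbit moves monotonically to $(n+1, 0)$.

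\textbf{Asymptotics and monotonicity of $F$.} A direct differentiation gives $\tau F'(\tau) = (k-n) F(\tau) + A(k,n)\,Y(\tau)$. For $k < n$, the limit $Y\to(n-k)k^k$ produces $F(\tau)\to A(k,n)\,k^k = \beta(k,n)$, using $\omega_{2n-1} = 2\pi^n/(n-1)!$. For $k = n$, changing variable to $X$ in $\int Y\,dt = n\int_0^{n+1} X^{n-1}\,dX = (n+1)^n$ gives $F(\tau)\to (n+1)^n\pi^n/n! = a_0$. In the monotone regimes ($n-k \ge 4$ or $k = n$) the formula for $F'$ together with a sign analysis of $A(k,n)\,Y - (n-k)F$ along the orbit shows that $F$ is strictly increasing onto its image, yielding at most one solution per $a$ and identifying $\alpha^*(k,n) = \beta(k,n)$, resp.\ $\alpha^*(n,n) = a_0$. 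In the spiral regime the orbit winds around $P^*$ and $F$ oscillates around $\beta(k,n)$: the equation $F = \beta(k,n)$ has infinitely many preimages, $F = a$ for $a$ close to but not equal to $\beta$ has a finite number of preimages that tends to infinity as $a\to\beta$, and $\alpha^*(k,n)$ is the largest local maximum of $F$, strictly above $\beta$.

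\textbf{Main obstacle.} Converting the qualitative phase-plane picture into the quantitative claims requires care at three points. First, one must verify that the monotone approach to $P^*$ in the $(X,Y)$ plane truly forces $A(k,n)\,Y > (n-k)F$ throughout the orbit, so that $F$ is genuinely monotone rather than merely convergent; a Lyapunov-style quantity such as $(n-k)F - A(k,n)\,Y$ should have a definite sign along the trajectory. Second, in the spiral regime one must count windings of the orbit around $P^*$ as a function of distance, so that the number of solutions for $a$ near $\beta$ grows to infinity; this reduces to the rotation estimate for the linearized spiral. Third, whether $a = \alpha^*$ is attained (open versus closed interval) must be resolved case by case: in the node case the limit $\beta$ is only approached as $\tau\to\infty$, while in the spiral case the supremum $\alpha^*$ is actually realized at the first local maximum.
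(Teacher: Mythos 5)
Your setup is essentially the paper's own: after the change of variables $X=s\,v_{1,s}$, $Y=s^k e^{-v_1}$, your autonomous system is exactly the paper's system \eqref{ODEsys} (put $v=(X/k)^k$ and rescale $w\propto Y$), and your shooting function is nothing but the orbit itself, since integrating the normalized ODE gives $\int_0^\tau e^{-v_1}\sigma^{n-1}d\sigma=v_{1,s}(\tau)^k\tau^n$, hence $F(\tau)=A(k,n)X(\tau)^k$; monotonicity of $F$ is therefore literally the statement that the orbit stays in the region $\{Y\ge (n-k)X^k\}$, i.e.\ above the line $w=(n-k)v$ in the paper's coordinates. The problem is that the two facts on which all the quantitative conclusions rest are precisely the ones you defer to your ``main obstacle'' paragraph, and they are where the actual proof lives. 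First, every counting and limiting statement (that $F\to\beta(k,n)$, that the line $X=k$ is crossed infinitely often in the spiral regime and only finitely often off it, that the existence range is an interval up to $\alpha^*$) requires that the orbit issued from the degenerate point $(0,0)$ genuinely converges to the interior rest point; the linearization only gives the local node/spiral picture. The paper secures global convergence with the explicit Lyapunov function $L(v,w)$, whose basin is the whole open quadrant; you provide no substitute (Poincar\'e--Bendixson alone does not exclude periodic orbits or escape), so this is a gap rather than a routine verification.

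Second, in the regime $n-k\ge 4$ you must actually prove $Y>(n-k)X^k$ along the entire orbit; convergence to a stable node does not by itself prevent the trajectory from crossing that line at earlier times, and your proposed ``sign analysis of $A(k,n)Y-(n-k)F$'' is exactly the unproved assertion restated. The paper's proof of this step is its invariant-region argument: the region bounded by $w=(n-k)v$ and the curve $w=(n-k)v^{-1/\beta_1}$, with invariance checked through the auxiliary function $h(\tau)$; Remark \ref{regionmodify} even notes that the triangular region used in \cite{BHN} fails to be invariant for some $(n,k)$ in the complex Hessian case, so this construction cannot be waved through as standard. Finally, your tentative resolution of the endpoint question in the node case (that $\beta$ is only approached, never attained) is the opposite of the assertion being proved, $a\le\alpha^*(k,n)=\beta(k,n)$, and you leave it open; the discrepancy hinges on whether the limiting (singular) profile $u=k\log s$ is admitted, which must be addressed, not postponed. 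In short, the proposal reproduces the paper's strategy in outline, but the Lyapunov function and the invariant region --- the two substantive ingredients --- are missing.
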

Similar radially symmetric problems for real equations were considered before by several 
people (\cite{JoLu}, \cite{BHN}, \cite{JaSc}). They all used the phase plane method initiated in \cite{JoLu}. The above theorem  
generalizes \cite[Theorem 1]{BHN} to the complex Hessian case.  This is achieved by generalizing and modifying the argument used in \cite{BHN}. See also Remark \ref{regionmodify}.

\section{A Poho\v{z}aev identity for complex Monge-Amp\`{e}re equation}
In \cite{Poh} Poho\v{z}aev established an identity for solutions of the Dirichlet problem
\begin{equation}\label{linear}
\Delta u+f(u)=0 \mbox{ in } \Omega, \quad u=0 \mbox{ on } \partial\Omega.
\end{equation}
He used this identity to show that the problem \eqref{linear} has no nontrivial solutions when $\Omega$ is a bounded star-shaped domain in 
$\mathbb{R}^d$ and $f=f(u)$ is a continuous function on $\mathbb{R}$ satisfying the condition
\[
(d-2)uf(u)-2d F(u)>0 \quad \mbox{ for } u\neq 0, 
\]
where $F$ denotes the primitive $F(u)=\int_0^u f(t)dt$ of $f$. Later, Pucci-Serrin \cite{PuSe} generalized Poho\v{z}aev identity to identities for much general variational equations, and they 
obtained non-existence results using these type of identities.  We will follow Pucci-Serrin to derive a Poho\v{z}aev identity in the complex case. We will consider the general variational problem
associated to the functional
\[
\mathbb{F}=\int_\Omega \SF(z,u(z), u_{i\bar{j}}(z))dV.
\]
It's easy to verify that the Euler-Lagrange equation for $\mathbb{F}$ is 
\begin{equation}\label{ELeq}
\frac{\partial^2}{\partial z^i\partial\bar{z}^j}\mathscr{F}_{r_{i\bar{j}}}+\mathscr{F}_u=0.
\end{equation}

We can now state the Poho\v{z}aev type identity we need. Note that the coefficient for the last term is slightly different with the formula in \cite[(29)]{PuSe} in the real case.
\begin{prop}
For any constant $c$,
\begin{eqnarray}\label{PohId}
&&\PZI\left(z^i\SF+ (cu+z^q u_q)\PBZJ\SFPR\right)-\PBZJ\left(\PZI(cu+z^q u_q)\SFPR\right)\nonumber\\
&=&n\SF+z^i\SF_{z^i}-cu\SF_u-(c+1)u_{i\bar{j}}\SFPR.
\end{eqnarray}
\end{prop}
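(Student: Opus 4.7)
The proof is a direct computation, following the structure of the real Pucci--Serrin identity~\cite[(29)]{PuSe} but adapted to the mixed-type Euler--Lagrange equation~\eqref{ELeq} of the complex setting. The plan is to expand the two divergences on the left-hand side of~\eqref{PohId} by the Leibniz rule, apply the chain rule to partial derivatives of $\SF$ with respect to its arguments $(z,u,u_{i\bar j})$, substitute~\eqref{ELeq} at the one place it appears, and then check that three structural cancellations leave exactly the claimed right-hand side. The constant $c$ enters only as a free parameter; no particular value is distinguished.

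First I would expand
\[
\PZI(z^i\SF)=n\SF+z^i\partial_i\SF,\qquad \partial_i\SF=\SF_{z^i}+\SF_u\,u_i+\mathscr{F}_{r_{k\bar l}}\,\partial_i u_{k\bar l},
\]
using the chain rule on the three groups of arguments of $\SF$. Leibniz applied to the second divergence on the left gives
\[
\PZI\bigl((cu+z^q u_q)\PBZJ\SFPR\bigr)=\bigl((c+1)u_i+z^q u_{qi}\bigr)\PBZJ\SFPR+(cu+z^q u_q)\PZIBZJ\SFPR,
\]
after using $\partial_i(cu+z^q u_q)=(c+1)u_i+z^q u_{qi}$; equation~\eqref{ELeq} then replaces $\PZIBZJ\SFPR$ by $-\SF_u$. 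Finally,
\[
\PBZJ\bigl(\partial_i(cu+z^q u_q)\,\SFPR\bigr)=\bigl((c+1)u_{i\bar j}+z^q u_{qi\bar j}\bigr)\SFPR+\bigl((c+1)u_i+z^q u_{qi}\bigr)\PBZJ\SFPR.
\]

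Assembling the three pieces, three cancellations complete the proof. The cross term $\bigl((c+1)u_i+z^q u_{qi}\bigr)\PBZJ\SFPR$ appears with opposite signs in the second and third expansions and drops out. The chain-rule contribution $z^i u_i\SF_u$ cancels $-z^q u_q\SF_u$ coming from the Euler--Lagrange substitution, after relabelling the dummy index. Finally the two third-order terms $z^i\partial_i u_{k\bar l}\,\mathscr{F}_{r_{k\bar l}}$ and $z^q u_{qi\bar j}\SFPR$ coincide once one uses the symmetry of mixed partials $u_{qi\bar j}=\partial_q u_{i\bar j}$ (which requires only $C^3$ regularity of $u$) and relabels indices, so they cancel as well. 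What survives is exactly $n\SF+z^i\SF_{z^i}-cu\SF_u-(c+1)u_{i\bar j}\SFPR$, matching the right-hand side of~\eqref{PohId}.

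There is no serious obstacle beyond careful index bookkeeping. The only subtlety worth highlighting is that $\PZI(z^i\SF)$ produces a $z^i\SF_{z^i}$ term but no conjugate $\bar z^j\SF_{\bar z^j}$ counterpart; this one-sided behavior is built into the choice to take the $\PZI$-divergence of the first two terms on the left of~\eqref{PohId} while subtracting a $\PBZJ$-divergence, and it is what accounts for the coefficient $(c+1)$ of the last term differing slightly from the analogous real Pucci--Serrin coefficient.
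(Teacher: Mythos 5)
Your computation is correct and is essentially the paper's own proof: the paper organizes the same calculation by multiplying the Euler--Lagrange equation \eqref{ELeq} by $cu$ and by $z^q u_q$ and adding the scaling identity for $\PZI(z^i\SF)$, whereas you expand the left-hand divergences and substitute \eqref{ELeq} once; the two are the identical Leibniz/chain-rule bookkeeping read in opposite directions, with the same three cancellations.
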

\begin{proof}
This follows from direct computation. We give some key steps in the calculation. 
\begin{itemize}
\item
Multiply $u$ on both sides of equation \eqref{ELeq} and use the product rule for differentiation we get:
\begin{equation}\label{calstep1}
\PZI (u\PBZJ\SFPR) - \PBZJ(\PZIU\SFPR)+u_{i\bar{j}}\SFPR+u\SF_u=0.
\end{equation}
\item Multiply $z^q u_q$ on both sides of equation and use product rule twice, we get
\begin{equation}\label{calstep2}
\PZI(z^q u_q\PBZJ\SFPR)-\PBZJ\left( \PZI(z^q u_q)\SFPR \right)+u_{i\bar{j}}\SFPR+z^ku_{i\bar{j}k}\SFPR+z^q u_q\SF_u=0.
\end{equation}
\item
Use product rule and chain rule, we get
\begin{equation}\label{calstep3}
\frac{\partial}{\partial z^i}(z^i\SF)-n\SF=z^q\frac{\partial}{\partial z^q}\SF=z^q\SF_{z^q}+z^q u_q \SF_u +z^q u_{i\bar{j}q} \SFPR .
\end{equation}
\item
Multiplying \eqref{calstep1} by constant $c$ and combine it with \eqref{calstep2} and \eqref{calstep3}, we immediately get \eqref{PohId}.
\end{itemize}
\end{proof}

The relevant example to us is when
\begin{equation}\label{HSF}
\SF=-\frac{u S_k(u_{i\bar{j}})}{k+1}+F(z,u), \quad \mbox{ and } \mathbb{F}=\mathbb{F}_k=\mathbb{H}_k+\int_{\Omega} F(z,u)dV.
\end{equation}
where we define
\[
\mathbb{H}_k=-\frac{1}{k+1}\int_{\Omega}u S_k(u_{l\bar{m}})dV.
\]
The following lemma is well-known for the real k-Hessian operator (\cite{Rei}).  We give the complex version to see that \eqref{ELeq} in this case becomes the general complex k-Hessian equation
\begin{equation}\label{cHDi}
\left\{
\begin{array}{lr}
S_k(u_{l\bar{m}})=f(z,u),& \mbox{ on } \Omega,\\
u=0, &\mbox{ on } \partial\Omega.
\end{array}
\right.
\end{equation}
\begin{lem}\label{Newton}
Define the Newton tensor
\[
T_{k-1}(u_{l\bar{m}})^{i\bar{j}}=\frac{1}{k!}\sum_{}\delta^{i_1\dots i_{k-1}i}_{j_1\dots j_{k-1}j}u_{i_1\bar{j}_1}\dots u_{i_{k-1}\bar{j}_{k-1}}.
\]
Then we have
\begin{enumerate}
\item The tensor $\left(T_{k-1}(u_{l\bar{m}})^{i\bar{j}}\right)$ is divergence free, i.e.
\[
\PZI T_{k-1}(u_{l\bar{m}})^{i\bar{j}}=0=\PBZJ T_{k-1}(u_{l\bar{m}})^{i\bar{j}}
\]
\item
\[
S_k(u_{l\bar{m}})=\frac{1}{k}T_{k-1}(u_{})^{i\bar{j}}u_{i\bar{j}}.
\]
\item
\[
\frac{\partial S_k(u_{l\bar{m}})}{\partial u_{i\bar{j}}}=T_{k-1}(u_{l\bar{m}})^{i\bar{j}}.
\]
\end{enumerate}
\end{lem}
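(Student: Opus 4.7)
The plan is to reduce each of the three statements to a symbolic manipulation of the generalized Kronecker delta $\delta^{i_1\dots i_k}_{j_1\dots j_k}$, exploiting two facts: first, the delta is totally antisymmetric in its upper indices and in its lower indices; second, in holomorphic coordinates the mixed partial derivatives of $u$ commute in the holomorphic variables, i.e.\ $u_{i_1\bar{j}_1,i}=u_{i\bar{j}_1,i_1}$ because both equal $\partial_{i_1}\partial_i\partial_{\bar{z}^{j_1}}u$. The argument is the direct transcription of Reilly's proof in the real $k$-Hessian case.

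I would first dispose of (2) and (3), which are purely algebraic. By antisymmetry of $\delta$ in its upper indices, the product $\delta^{i_1\dots i_k}_{j_1\dots j_k}u_{i_1\bar{j}_1}\cdots u_{i_k\bar{j}_k}$ is symmetric under permutation of the $k$ factors $u_{i_\ell\bar{j}_\ell}$. Singling out the last factor and renaming $i_k\to i,\ j_k\to j$ then yields (2), with the combinatorial factor arising from comparing the normalizations of $S_k$ and $T_{k-1}$. Statement (3) follows by the same symmetry: $\partial S_k/\partial u_{i\bar{j}}$ is the sum of the $k$ terms obtained by omitting one $u_{i_\ell\bar{j}_\ell}$ factor in turn, and these $k$ terms are all equal and together reconstitute $T_{k-1}^{i\bar{j}}$.

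The substantive part is (1). Differentiating $T_{k-1}^{i\bar{j}}$ in $z^i$ via the product rule and invoking the antisymmetry once more yields $k-1$ equal contributions of the form
\[
\delta^{i_1\dots i_{k-1}i}_{j_1\dots j_{k-1}j}\, u_{i_1\bar{j}_1,i}\, u_{i_2\bar{j}_2}\cdots u_{i_{k-1}\bar{j}_{k-1}}
\]
(summed over all repeated indices). I would then swap $i$ and $i_1$ in the upper indices of $\delta$, which introduces a minus sign, and relabel dummy indices; the commutation relation $u_{i_1\bar{j}_1,i}=u_{i\bar{j}_1,i_1}$ then returns the summand to its original form, so the sum equals its own negative and must vanish. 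The conjugate identity $\partial_{\bar{z}^j}T_{k-1}^{i\bar{j}}=0$ follows either by complex conjugation (since $u_{i\bar{j}}$ being Hermitian forces $T_{k-1}^{i\bar{j}}$ to be Hermitian) or by repeating the identical argument on the lower indices of $\delta$ with $u_{i_1\bar{j}_1,\bar{j}}=u_{i_1\bar{j},\bar{j}_1}$.

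The main obstacle is purely combinatorial bookkeeping: tracking the factor $1/k!$ (or $1/(k-1)!$) correctly when dummy indices are relabeled, and pairing the sign produced by the antisymmetry of $\delta$ with the commuted third derivative of $u$ in the right slot. There is no analytic subtlety; once the indices are set up properly, everything reduces to the two facts stated at the outset.
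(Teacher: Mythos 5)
Your proof is correct, and it is precisely the standard Reilly-type argument: the paper gives no proof of this lemma at all (it states it as well known, citing Reilly for the real case), so your transcription --- the antisymmetry of $\delta^{i_1\dots i_{k-1}i}_{j_1\dots j_{k-1}j}$ contracted against the symmetry of $u_{i_1\bar{j}_1 i}$ in the holomorphic indices for part (1), and the permutation symmetry of the contracted product for parts (2) and (3) --- supplies exactly the verification the paper omits. One point your ``bookkeeping'' remark should make explicit: with the prefactor $\frac{1}{k!}$ as printed in the lemma, the trace identity would come out as $S_k=T_{k-1}^{i\bar{j}}u_{i\bar{j}}$ with no factor $\frac{1}{k}$; the normalization consistent with (2), (3) and with the later formula \eqref{Levicurv} is $\frac{1}{(k-1)!}$, so the $\frac{1}{k!}$ in the displayed definition is a typo, and your argument, which tracks exactly this factor, goes through verbatim once that is fixed.
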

For the complex Hessian equation, we substitute \eqref{HSF} into \eqref{PohId} and use lemma \eqref{Newton} to get
\begin{eqnarray}\label{PohId}
&&\PZI\left(z^i\left(\frac{-u S_k(u_{l\bar{m}})}{k+1}+F(z,u)\right)+ (cu+z^qu_q)\frac{-u_{\bar{j}} T_{k-1}(u_{l\bar{m}})^{i\bar{j}}}{k+1}\right)\nonumber\\
&&+\PBZJ\left(\PZI(cu+z^qu_q)\frac{u T_{k-1}(u_{l\bar{m}})^{i\bar{j}}}{k+1}\right)\nonumber\\
&=&[k(c+1)+c-n]\frac{u S_k(u_{l\bar{m}})}{k+1}+nF-cu f+z^i F_{z^i}.
\end{eqnarray}
If we make the coefficient of the first term vanish, we get the important constant which will be useful later:
\[
c_0=\frac{n-k}{k+1}.
\]
The following  lemma is just the divergence theorem in complex coordinate. Note that we use the following standard normalizations.
\begin{equation}
\PZI=\frac{1}{2}\left(\frac{\partial}{\partial x^{2i-1}}-\sqrt{-1}\frac{\partial}{\partial x^{2i}}\right),\quad g_{i\bar{j}}=\frac{1}{2}\delta_{ij}, \quad \nu_{i}=g_{i\bar{j}}\nu^{\bar{j}}=\frac{1}{2}\nu^{\bar{i}},\quad
z^i\nu_i+\bar{z}^i\nu_{\bar{i}}=x^\alpha \nu_\alpha.
\end{equation}
\begin{lem}\label{cdivthm}
$\Omega$ is a bounded domain in $\mathbb{C}^n$ with $C^2$ boundary. Let $X=X^i\PZI$ be a $C^1$ vector field on $\bar{B}_1$ of type (1,0). Let $\nu$ denote the outward unit normal vector 
of $\partial\Omega$. Decompose $\nu=\nu^{(1,0)}+\nu^{(0,1)}$ such that $\nu^{(1,0)}=\nu^i\PZI$ and $\nu^{(0,1)}=\nu^{\bar{j}}\frac{\partial}{\partial\bar{z}^j}$. Then we have
\[ 
\int_{\Omega}\frac{\partial X^i}{\partial z^i}dV=\oint_{\partial\Omega} X^i \nu_i d\sigma, 
\]
where $d\sigma$ is the induced volume form on $\partial\Omega$ from the Euclidean volume form on $\mathbb{C}^n=\mathbb{R}^{2n}$.
\end{lem}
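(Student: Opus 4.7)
The plan is to reduce the assertion to the classical real divergence theorem on $\Omega \subset \mathbb{R}^{2n}$ by extracting two real vector fields from the complex $(1,0)$-field $X$. Write $X^i = \xi^i + \sqrt{-1}\,\eta^i$ with real $C^1$ functions $\xi^i,\eta^i$ on $\bar\Omega$, and introduce
\[
Y_1 := X + \bar X, \qquad Y_2 := \sqrt{-1}\,(\bar X - X),
\]
which are real $C^1$ vector fields. A short computation using $\partial/\partial z^i = \tfrac{1}{2}(\partial/\partial x^{2i-1} - \sqrt{-1}\,\partial/\partial x^{2i})$ and its conjugate shows that in the Cartesian frame
\[
Y_1 = \xi^i\,\partial_{x^{2i-1}} + \eta^i\,\partial_{x^{2i}},\qquad Y_2 = \eta^i\,\partial_{x^{2i-1}} - \xi^i\,\partial_{x^{2i}}.
\]

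Splitting $\partial X^i/\partial z^i$ into real and imaginary parts gives the key identity
\[
2\,\frac{\partial X^i}{\partial z^i} = \bigl(\partial_{x^{2i-1}}\xi^i + \partial_{x^{2i}}\eta^i\bigr) + \sqrt{-1}\bigl(\partial_{x^{2i-1}}\eta^i - \partial_{x^{2i}}\xi^i\bigr) = \mathrm{div}\,Y_1 + \sqrt{-1}\,\mathrm{div}\,Y_2.
\]
For the boundary term, reality of $\nu$ forces $\nu^{\bar i} = \overline{\nu^i}$; pairing $\nu = \nu^{2i-1}\partial_{x^{2i-1}} + \nu^{2i}\partial_{x^{2i}}$ with the complex frame $\partial_{z^i},\partial_{\bar z^i}$ yields $\nu^i = \nu^{2i-1} + \sqrt{-1}\,\nu^{2i}$, and therefore, using the convention $g_{i\bar j} = \tfrac{1}{2}\delta_{ij}$, $\nu_i = \tfrac{1}{2}\,\overline{\nu^i} = \tfrac{1}{2}(\nu^{2i-1} - \sqrt{-1}\,\nu^{2i})$. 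An analogous expansion then gives
\[
2\,X^i\nu_i = \langle Y_1,\nu\rangle + \sqrt{-1}\,\langle Y_2,\nu\rangle.
\]

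Finally, apply the classical divergence theorem in $\mathbb{R}^{2n}$ separately to the real $C^1$ vector fields $Y_1$ and $Y_2$, form (first identity) $+ \sqrt{-1}\cdot$(second identity), and divide by $2$: the complex divergence identity falls out. The argument is entirely bookkeeping, and the only mild care needed is tracking the factors of $\tfrac{1}{2}$ forced by the conventions $g_{i\bar j} = \tfrac{1}{2}\delta_{ij}$ and $\nu_i = g_{i\bar j}\nu^{\bar j}$ stated just before the lemma. An equivalent route applies Stokes' theorem to the $(2n-1)$-form $\omega = c\sum_i (-1)^{i-1} X^i\,dz^1\wedge\cdots\wedge\widehat{dz^i}\wedge\cdots\wedge dz^n\wedge d\bar z^1\wedge\cdots\wedge d\bar z^n$, with $c$ chosen so that $d\omega = (\partial X^i/\partial z^i)\,dV$, and then identifies the pullback of $\omega$ to $\partial\Omega$ with $X^i\nu_i\,d\sigma$; this trades the real/imaginary decomposition for a slightly longer constant computation, but there is no essential obstacle in either approach.
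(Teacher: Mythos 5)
Your proof is correct, and it is exactly the argument the paper implicitly relies on: the paper states this lemma without proof, remarking only that it is ``just the divergence theorem in complex coordinates,'' and your reduction via $Y_1=X+\bar X$, $Y_2=\sqrt{-1}(\bar X-X)$ to the real divergence theorem is the standard bookkeeping behind that remark. The factors of $\tfrac12$ you track (via $\nu_i=\tfrac12\overline{\nu^i}$) are consistent with the paper's normalization $g_{i\bar j}=\tfrac12\delta_{ij}$, $z^i\nu_i+\bar z^i\nu_{\bar i}=x^\alpha\nu_\alpha$, so nothing further is needed.
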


Assume $\Omega$ is a 
with $C^2$-boundary. For any $p\in\partial\Omega$, choose a small ball $B_{\epsilon}(p)$ such that $\Omega\cap B_{\epsilon}=\{\rho\le 0\}$, where $\rho$ is a $C^2$-function satisfying $|\nabla \rho|(p)=1$. Recall that the Levi form can be defined as
\[
\mathbb{L}=\sqrt{-1}\frac{\partial^2 \rho}{\partial z^i\partial\bar{z}^j}dz^i\otimes d\bar{z}^j.
\]
$\mathbb{L}$ is a symmetric Hermitian form on the space $\mathcal{T}=T^{(1,0)}\mathbb{C}^n\cap T(\partial\Omega)\otimes_{\mathbb{R}}\mathbb{C}=\{\xi\in\mathbb{C}^n; \xi^i f_i=0\}\cong (T(\partial\Omega)\cap J T(\partial\Omega), J)$, where $J$ is the standard complex structure on $\mathbb{C}^n\cong\mathbb{R}^{2n}$. Assume $\nu$ is the outer unit normal vector to $\partial\Omega$ then at point $p$, we have $\nu_i=\rho_i$. Denote
\begin{equation}\label{Levicurv}
\tilde{S}_{k-1}(\partial\Omega)=\frac{1}{(k-1)!}\sum_{1\le i_1,\dots, j_k\le n}\delta^{i_1\dots i_{k}}_{j_1\dots j_{k}}\rho_{i_1\bar{j}_1}\dots \rho_{i_{k-1}\bar{j}_{k-1}}\nu_{i_k}\nu_{\bar{j}_k}
=T_{k-1}(\rho_{l\bar{m}})^{i\bar{j}}\nu_i\nu_{\bar{j}}.
\end{equation}
We can choose coordinates, such that $\nu=\partial_{z^n}+\partial_{\bar{z}^n}$ and so $\nu_i=\frac{1}{2}\delta_{in}=\frac{1}{2}\nu^{\bar{i}}$. Then we see that, up to a constant, $\tilde{S}_{k-1}(\partial\Omega)$ is equal to $S_{k-1}\left(\mathbb{L}|_{\mathcal{T}}\right)$, the later being the $(k-1)$-th symmetric function of the eigenvalues of the restricted operator $\mathbb{L}|_{\mathcal{T}}$. 

Note that $\tilde{S}_{k-1}(\partial\Omega)$ is a well defined local invariant for $\partial\Omega$, i.e. it is independent of the defining function $\rho$. $\Omega$ is called to be strongly $k$-pseudoconvex, if $\tilde{S}_{k-1}(\partial\Omega)>0$
Note that the real version of $\tilde{S}_{k-1}(\partial\Omega)$ appeared in \cite[formula (6)]{Tso2}.

For example, when $\Omega$ is a ball $B_{R}(0)$,  $\nu_i=\frac{z^i}{2R}$ and we can choose $\rho=\frac{1}{2R}(|z|^2-R^2)$. By symmetry, we can calculate at point $(0,\cdots,0,1)$ to get: 
\begin{eqnarray}\label{tildeSball}
\tilde{S}_{k-1}(\partial B_R)&=&\frac{1}{(k-1)!}\sum_{i_1,\dots, j_{k-1}}\delta^{i_1\dots i_{k-1}n}_{j_1\dots j_{k-1}n} \rho_{i_1\bar{j}_1}\dots \rho_{i_{k-1}\bar{j}_{k-1}}\nu_i\nu_{\bar{j}}\nonumber\\
&=&\frac{1}{4R^2}\frac{1}{(2R)^{k-1}(k-1)!}\sum_{1\le i_1,\dots, j_{k-1}\le n-1}\delta^{i_1\dots i_{k-1}}_{j_1\dots j_{k-1}} \delta_{i_1\bar{j}_1}\dots \delta_{i_{k-1}\bar{j}_{k-1}}\nonumber\\
&=&\frac{1}{2^{k+1}R^{k+1}}\binom{n-1}{k-1}.
\end{eqnarray}

We can now derive the important integral formula for us. 
\begin{prop}
Let $\Omega$ be a $C^2$-domain. Suppose $f$ belongs $C(\bar{\Omega}\times (-\infty,0])\cap C^1(\Omega\times (-\infty, 0))$ and is positive in $\Omega\times
(-\infty,0)$. Assume $u\in C^2(\bar{\Omega})\cap C^4(\Omega)$ is a solution to \eqref{cHDi}. Then we have the identity
\begin{equation}\label{PohIdentity1}
\oint_{\partial\Omega} z^i\nu_i \tilde{S}_{k-1}(\partial\Omega) |\nabla u|^{k+1} d\sigma=-(k+1)\int_{\Omega} \left(n F-\frac{n-k}{k+1}u f+z^i F_{z^i}\right)dV.
\end{equation}
\begin{equation}\label{PohIdentity2}
\oint_{\partial\Omega} \langle x,\nu\rangle \tilde{S}_{k-1}(\partial\Omega) |\nabla u|^{k+1} d\sigma=-(k+1)\int_{\Omega} \left(2(n F-\frac{n-k}{k+1}u f)+x^{\alpha} F_{x^{\alpha}}\right)dV.
\end{equation}
\end{prop}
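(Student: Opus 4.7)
The plan is to substitute $\SF=-uS_k(u_{i\bar j})/(k+1)+F(z,u)$ from \eqref{HSF} into the Poho\v zaev identity \eqref{PohId}, integrate over $\Omega$, and convert the two divergences into boundary integrals via Lemma \ref{cdivthm}. I would immediately set $c=c_0=(n-k)/(k+1)$ to annihilate the coefficient $[k(c+1)+c-n]$ of $uS_k/(k+1)$ on the right-hand side, as already singled out above. Taking the natural primitive $F(z,u)=\int_0^u f(z,s)\,ds$, the boundary condition $u=0$ on $\partial\Omega$ kills every remaining boundary contribution but one: the $\PBZJ$ boundary integral vanishes because its integrand carries an overall factor $u$; inside the $\PZI$ boundary integral, the $-uS_k/(k+1)$ piece of $z^i\SF$ drops, the $cu$ summand of $(cu+z^qu_q)$ drops, and $z^iF(z,0)\nu_i=0$. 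What survives is
\[
-\frac{1}{k+1}\oint_{\partial\Omega} z^qu_q\,u_{\bar j}\,T_{k-1}(u_{l\bar m})^{i\bar j}\,\nu_i\,d\sigma=\int_{\Omega}\Bigl(nF-\tfrac{n-k}{k+1}uf+z^iF_{z^i}\Bigr)dV.
\]

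The crux is then the pointwise identity
$z^qu_q\,u_{\bar j}\,T_{k-1}(u_{l\bar m})^{i\bar j}\,\nu_i=z^i\nu_i\,\tilde S_{k-1}(\partial\Omega)\,|\nabla u|^{k+1}$ on $\partial\Omega$, after which \eqref{PohIdentity1} follows by multiplying through by $-(k+1)$. To verify it at $p\in\partial\Omega$, choose unitary coordinates adapted to $\nu$ (as in the computation of \eqref{tildeSball}) so that $\rho_i(p)=\tfrac12\delta_{in}$ and hence $\nu_i(p)=\tfrac12\delta_{in}$; the Levi tangent space $\mathcal T$ is spanned at $p$ by $\partial_{z^1},\ldots,\partial_{z^{n-1}}$. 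Since $u\equiv 0$ on $\{\rho=0\}$, one can write $u=\mu\rho$ locally with $\mu|_{\partial\Omega}=\lambda:=\partial u/\partial\nu$; evaluating $u_{i\bar j}=\mu_{i\bar j}\rho+\mu_i\rho_{\bar j}+\mu_{\bar j}\rho_i+\mu\rho_{i\bar j}$ at $p$ collapses to $u_{\alpha\bar\beta}(p)=\lambda\rho_{\alpha\bar\beta}(p)$ for $1\le\alpha,\beta\le n-1$, and $u_i(p)=\lambda\rho_i(p)$. The key algebraic input is that the generalized Kronecker $\delta^{i_1\dots i_{k-1}n}_{j_1\dots j_{k-1}n}$ forces all dummy indices to be strictly less than $n$, so that $T_{k-1}(A)^{n\bar n}$ depends only on the principal $(n-1)\times(n-1)$ submatrix of $A$; consequently $T_{k-1}(u_{l\bar m})^{n\bar n}(p)=\lambda^{k-1}T_{k-1}(\rho_{l\bar m})^{n\bar n}(p)$. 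Combining with $|\nabla u|=\lambda$ and $\tilde S_{k-1}(\partial\Omega)=\tfrac14 T_{k-1}(\rho_{l\bar m})^{n\bar n}$ at $p$, both sides of the pointwise identity evaluate to $\tfrac18\,z^n\lambda^{k+1}\,T_{k-1}(\rho_{l\bar m})^{n\bar n}(p)$.

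For \eqref{PohIdentity2}, note that \eqref{PohIdentity1} is complex-valued because $z^i\nu_i$ is not real in general. Running the entire derivation with the antiholomorphic dilation $\bar z^q\partial/\partial\bar z^q$ in place of $z^q\partial/\partial z^q$ (equivalently, complex-conjugating throughout) yields the conjugate identity, with $z^i\nu_i$ replaced by $\bar z^i\nu_{\bar i}$ and $z^iF_{z^i}$ by $\bar z^iF_{\bar z^i}$. Adding the two, and invoking $z^i\nu_i+\bar z^i\nu_{\bar i}=\langle x,\nu\rangle$ together with $z^iF_{z^i}+\bar z^iF_{\bar z^i}=x^\alpha F_{x^\alpha}$ from the paper's normalization, produces \eqref{PohIdentity2}. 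The main obstacle is the pointwise boundary identification above: this is where the Levi geometry of $\partial\Omega$ enters through the structural property of $T_{k-1}^{n\bar n}$; everything else amounts to bookkeeping of the factors of $\tfrac12$ produced by the convention $\nu_i=\tfrac12\nu^{\bar i}$.
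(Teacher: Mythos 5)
Your proposal is correct and follows essentially the same route as the paper: substitute \eqref{HSF} into \eqref{PohId} with $c=c_0=\frac{n-k}{k+1}$, integrate over $\Omega$ via Lemma \ref{cdivthm}, use $u=0$ on $\partial\Omega$ (and $F(z,0)=0$) to kill every boundary contribution except the $z^qu_q$-term, identify that surviving integrand with $z^i\nu_i\,\tilde{S}_{k-1}(\partial\Omega)|\nabla u|^{k+1}$, and obtain \eqref{PohIdentity2} by adding the complex-conjugate identity. The only deviation is in a sub-step: the paper gets the pointwise boundary identity in one line by using $u_i=|\nabla u|\nu_i$ on $\partial\Omega$ and the defining-function independence of $\tilde{S}_{k-1}$ with $\rho=u/|\nabla u|$ in \eqref{Levicurv}, whereas you reprove it directly in adapted coordinates via $u_{\alpha\bar{\beta}}(p)=\lambda\rho_{\alpha\bar{\beta}}(p)$ and the fact that $T_{k-1}^{n\bar{n}}$ sees only the tangential block --- equally valid (only note that in the factorization $u=\mu\rho$ the function $\mu$ is a priori just $C^1$, so the term formally containing $\mu_{i\bar{j}}$, which anyway carries the vanishing factor $\rho$, is better handled by the standard tangential second-derivative argument).
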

\begin{proof}
The second identity follows from the first easily. So we only prove the first identity. When $\nabla u\neq 0$, letting $\rho=\frac{u}{|\nabla u|}$ in \eqref{Levicurv}, we get
\begin{eqnarray*}
z^k u_k u_{\bar{j}}T_{k-1}(u_{p\bar{q}})^{i\bar{j}}\nu_i&=&
z^k \nu_k  |\nabla u|^{-(k-1)} \nu_{\bar{j}}T_{k-1}(u_{p\bar{q}})^{i\bar{j}}\nu_i|\nabla u|^{k+1}\\
&=&z^k\nu_k \tilde{S}_{k-1} |\nabla u|^{k+1}.
\end{eqnarray*}
When $\nabla u=0$, then both sides are equal to zero.
Now we can integrate \eqref{PohId} on $B_1$ using divergence theorem (Lemma \ref{cdivthm}) and the boundary condition $u=0$ on $\partial B_1$ to get the first identity.
\end{proof}
\begin{proof}[Part I of Proof of Theorem \ref{critCHE}]
When $\Omega=B_R$, $\langle x,\nu\rangle=R>0$. $\tilde{S}_{k-1}(\partial B_R)\ge 0$ is a positive constant. So the left hand side of \eqref{PohIdentity2} is positive. When $f(u)=(-u)^{p}$, $F(u)=-\frac{1}{p+1}(-u)^{p+1}$. So if
$\frac{n}{p+1}-\frac{n-k}{k+1}\le 0$, i.e. $p\ge \frac{(n+1)k}{n-k}=\gamma(k,n)$, there is no nontrivial nonpositive solution in $C^2(\bar{B}_1)\cap C^4(B_1)$ to \eqref{chess}.
\end{proof}
\begin{rem}
Similar argument actually gives non-existence result for star-shaped and strongly $k$-pseudoconvex domains.
\end{rem}

\section{Non-local problem with exponential nonlinearities}
In this section, we prove Theorem \ref{nonlocalthm} using Pho\v{z}aev method. As mentioned before, when $k=1$ the argument was used in \cite{CLMP}. The argument can be generalized to higher $k$ by the introduction of $\tilde{S}_{k-1}(\partial\Omega)$ in \eqref{Levicurv}. Recall that we consider the following non-local equation:
\begin{equation}\label{nonlocaleq2}
\det(u_{l\bar{m}})=a\frac{e^{-u}}{\int_{B_1} e^{-u}dV},\quad u=0 \mbox{ on } \partial B_1.
\end{equation}
\begin{proof}
In identity \eqref{PohIdentity2}, if $f$ does not depend on $z$, then it becomes:
\begin{equation}\label{PohIdentity3}
-2 \int_{\Omega} \left(n (k+1) F(u)-(n-k)u f(u)\right)dV=\int_{\partial\Omega}\langle x,\nu\rangle\tilde{S}_{k-1}(\partial\Omega)|\nabla u|^{k+1}d\sigma.
\end{equation}
To estimate the right hand side, note that we can integrate both sides of \eqref{cHLBG} and use divergence theorem to get 
\begin{eqnarray*}
a&=&\int_{\Omega} S_k(u_{l\bar{m}})=\frac{1}{k}\int_{\Omega}T_{k-1}(u_{l\bar{m}})^{i\bar{j}}u_{i\bar{j}}dV\\
&=&\frac{1}{k}\oint_{\partial \Omega} u_iT_{k-1}(u_{l\bar{m}})^{i\bar{j}} (\nu^{p}g_{p\bar{j}})= \frac{1}{k}\oint_{\partial \Omega} \tilde{S}_{k-1}(\partial \Omega) |\nabla u|^k.
\end{eqnarray*}
For simplicity, we let $\tilde{S}_{k-1}$ denote the quantity $\tilde{S}_{k-1}(\partial\Omega)$ defined in \eqref{Levicurv}. Now by H\"{o}lder's inequality, we have
\begin{eqnarray*}
ka&=&\oint_{\partial\Omega} \tilde{S}_{k-1}|\nabla u|^k=\oint_{\partial\Omega}(\langle x,\nu\rangle\tilde{S}_{k-1})^{k/(k+1)}|\nabla u|^k  (\langle x,\nu\rangle)^{-k/(k+1)}\tilde{S}_{k-1}^{1/(k+1)}\\
&\le &\left(\oint_{\partial\Omega}\langle x,\nu\rangle\tilde{S}_{k-1} |\nabla u|^{k+1}\right)^{k/(k+1)}\left(\oint_{\partial\Omega}\langle x,\nu\rangle^{-k}\tilde{S}_{k-1}\right)^{1/(k+1)}
\end{eqnarray*}
So we get
\begin{equation}\label{starlowerbound1}
\oint_{\partial\Omega} \tilde{S}_{k-1}|\nabla u|^{k+1}\ge \frac{(ka)^{(k+1)/k}}{\left(\oint_{\partial\Omega}\langle x,\nu\rangle^{-k} \tilde{S}_{k-1}\right)^{1/k}}.
\end{equation}
Now we specialize to equation \eqref{nonlocaleq2}. When $\Omega$ is the unit ball, $\langle x,\nu\rangle\equiv 1$ and, by \eqref{tildeSball} and for simplicity, we denote
\[
\tilde{S}_{k-1}=\tilde{S}_{k-1}(\partial B_1)\equiv\frac{1}{2^{k+1}}\binom{n-1}{k-1}.
\]  
Also we have
\[
f(u)=a\frac{e^{-u}}{\int_{B_1}e^{-u}dV},\quad F(u)=a\frac{1-e^{-u}}{\int_{B_1}e^{-u}dV}.
\]
Combine \eqref{PohIdentity3} and \eqref{starlowerbound1}, we get
\[
2a\left(\tilde{S}_{k-1}|\partial B_1|\right)^{1/k}\int_{B_1}[(n-k)u e^{-u}+n(k+1)(e^{-u}-1)] dV\ge (k a)^{(k+1)/k} \int_{B_1} e^{-u} dV.
\]
So there is no solution if $a$ satisfies
\begin{equation}
a\ge \frac{(2n(k+1))^k\omega_{2n-1} \tilde{S}_{k-1}}{k^{k+1}}=\left(\frac{n(k+1)}{k}\right)^k\binom{n}{k}\frac{\pi^n}{n!}=:\alpha_1(k,n).
\end{equation}
When $k=n$, the righthand is equal to $(n+1)^n\pi^n/n!$ which is sharp.
\end{proof}
\begin{rem}
When $k<n$, we can get better estimate for $a$. For this, consider the function
\[
\mu(x)=c_1 (e^x-1)-c_2 xe^x-c_3 e^x. 
\]
with $c_1=n(k+1)$, $c_2=(n-k)$ and $c_3=k^{(k+1)/k}\alpha_2(k,n)^{1/k}(2\tilde{S}_{k-1}\omega_{2n-1})^{-1/k}$. The condition 
${\rm max}\{\mu(x); x\ge 0\}=0$
gives a better upper bound $\alpha_2(k,n)$ for $a$, although it's still not sharp:
\begin{eqnarray*}
0<\alpha_2(k,n)
=\alpha_1(k,n)\left[1-\frac{n-k}{n(k+1)}+\frac{n-k}{n(k+1)}\log\frac{n-k}{n(k+1)}\right]^k\le \alpha_1(k,n).
\end{eqnarray*}
\end{rem}
\begin{rem}
If we consider the similar real Hessian equation on $B_1\subset \mathbb{R}^{d}$:
\begin{equation}\label{rHLBG}
S_{k}(u_{\alpha\beta})=\tilde{a}\frac{e^{-u}}{\int_{B_1}e^{-u}dV} \mbox{ on } B_1, \quad u=0 \mbox{ on } \partial B_1,
\end{equation}
then we can use the real version of above calculation to get the following necessary condition for $a$ in order for \eqref{rHLBG} to have a solution in $C^2(\bar{B}_1)\cap C^4(B_1)$. 
\[
\tilde{a}<\tilde{\alpha}(k,d):=\frac{((k+1)d)^k \binom{d-1}{k-1}\omega_{d-1}}{k^{k+1}}.
\] 
The case when this bound is sharp is when the real dimension is even $d=2n$ and $k=d/2=n$. Indeed, we have
\begin{prop}
When $k=\frac{d}{2}$, then there exists a solution in $C^2(\bar{B}_1)\cap C^4(B_1)$ to \eqref{rHLBG} if and only if $\tilde{a}<\tilde{\alpha}(d/2,d)$.
\end{prop}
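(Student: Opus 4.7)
My plan is to reduce the ``only if'' direction to the preceding remark---which already established $\tilde{a}<\tilde{\alpha}(k,d)$ for any $k,d$ via the real analogue of the Poho\v{z}aev and H\"{o}lder argument used for Theorem \ref{nonlocalthm}---and to prove the ``if'' direction by exhibiting a concrete one-parameter family of radial solutions that covers the entire interval $(0,\tilde{\alpha}(d/2,d))$. The special feature of $k=d/2=n$ is that the Fubini-Study potential, read as a real function on $\mathbb{R}^{2n}$, actually solves the real $n$-Hessian equation, thanks to the numerical coincidence $\binom{2n-1}{n-1}=\binom{2n-1}{n}$.

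For existence I would set
\[
u_\epsilon(x)=(n+1)\bigl[\log(|x|^2+\epsilon^2)-\log(1+\epsilon^2)\bigr],\quad \epsilon>0,
\]
which is smooth on $\overline{B_1}$ and vanishes on $\partial B_1$. To check $n$-admissibility, I would observe that for a radial function on $\mathbb{R}^{2n}$ with $a=u''$ and $b=u'/r$ the real Hessian has eigenvalues $a$ (once) and $b$ (multiplicity $2n-1$), so
\[
S_j(D^2 u)=\binom{2n-1}{j-1}b^{j-1}\Bigl[a+\tfrac{2n-j}{j}\,b\Bigr].
\]
A direct computation gives $b>0$ and $a+b=(n+1)\cdot 4\epsilon^2/(|x|^2+\epsilon^2)^2>0$; since $(2n-j)/j\ge 1$ for $j\le n$, every $S_j$ with $1\le j\le n$ stays positive, establishing $n$-admissibility. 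Using the identity $\binom{2n-1}{n-1}=\binom{2n-1}{n}$ together with the integral $\epsilon^2\int_0^1 r^{2n-1}(r^2+\epsilon^2)^{-n-1}\,dr=\tfrac{1}{2n(1+\epsilon^2)^n}$ already used in \eqref{aepsilon}, one then computes
\[
S_n(D^2 u_\epsilon)=\binom{2n-1}{n-1}(n+1)^n\frac{2^{n+1}\epsilon^2}{(|x|^2+\epsilon^2)^{n+1}},\qquad \int_{B_1}e^{-u_\epsilon}\,dV=\frac{\omega_{2n-1}(1+\epsilon^2)}{2n\,\epsilon^2},
\]
so that $u_\epsilon$ solves \eqref{rHLBG} with $\tilde{a}_\epsilon=\tilde{\alpha}(n,2n)/(1+\epsilon^2)^n$. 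Since $\epsilon\mapsto\tilde{a}_\epsilon$ is continuous and strictly decreasing from $\tilde{\alpha}(n,2n)$ to $0$ as $\epsilon$ ranges over $(0,\infty)$, every value in $(0,\tilde{\alpha}(n,2n))$ is attained by some $u_\epsilon$.

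The main obstacle is the $n$-admissibility check together with verifying that the resulting $\tilde{a}_\epsilon$ exhausts the interval right up to (but excluding) the Poho\v{z}aev ceiling $\tilde{\alpha}(n,2n)$: both hinge on the identity $\binom{2n-1}{n-1}=\binom{2n-1}{n}$, which fails outside $k=d/2$ and is precisely why no analogous sharp result is available for other pairs $(k,d)$.
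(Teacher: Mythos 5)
Your proposal is correct and follows essentially the same route as the paper: the ``only if'' direction is delegated to the Poho\v{z}aev/H\"{o}lder bound of the preceding remark, and the ``if'' direction uses exactly the same family $u_\epsilon$, whose parameter $\tilde{a}_\epsilon=\tilde{\alpha}(n,2n)/(1+\epsilon^2)^n$ sweeps out all of $(0,\tilde{\alpha}(n,2n))$. The only (harmless) difference is that you verify $S_n(D^2u_\epsilon)$, the admissibility, and the normalization directly from the radial eigenvalue formula, whereas the paper passes through the radial reduction of the real $k$-Hessian equation and observes that for $k=d/2$ the leading term drops out so the ODE coincides with the radial complex Monge-Amp\`{e}re equation already solved in \eqref{FSsol}--\eqref{aepsilon}; this is the same structural coincidence you encode via $\binom{2n-1}{n-1}=\binom{2n-1}{n}$.
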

\begin{proof}
We just need to show that, for $k=d/2$, there exists a radially symmetric solution for \eqref{rHLBG} when $a<\tilde{\alpha}(k,d)$.
First it's easy to verify that the radial symmetry reduces the equation \eqref{rHLBG} to the following equation:
\[
\frac{d-2k}{d}\binom{d}{k}(u_rr)^k+\frac{1}{k}\binom{d-1}{k-1}(u_rr)^{k-1} (u_{r}r)_rr=\frac{\tilde{a}}{\omega_{d-1}}\frac{r^{2k} e^{-u}}{\int_0^1 e^{-u(r)}r^{2n-1}dr}. \quad u=0 \mbox{ on } \partial B_1.
\]
Now assume $k=\frac{d}{2}=n$ and we introduce the variable $s=r^2$. Then the above equation becomes:
\begin{equation}\label{realhalfrad}
((u_ss)^n)_ss=\frac{n^2\tilde{a}}{\omega_{d-1}\binom{d-1}{n-1}2^{n}}\frac{s^n e^{-u}}{\int_0^1 e^{-u(s)}s^{n-1}ds}.
\end{equation}
This equation is integrable since it's the same as the radial reduction of complex complex Monge-Amp\`{e}re equation. See \eqref{radLBGEQ}, \eqref{FSsol} and \eqref{aepsilon}.
So it has solution
\[
u_{\epsilon}=(n+1)[\log(|x|^2+\epsilon^2)-\log (1+\epsilon^2)],
\]
with the parameter
\[
\tilde{a}_{\epsilon}=\frac{1}{n}\binom{d-1}{n-1}2^{n+1}a_\epsilon=\frac{1}{n}\binom{d-1}{n-1}(2n+2)^n\frac{\omega_{d-1}}{(1+\epsilon^2)^n}.
\]
So $\tilde{a}_\epsilon\in (0,\tilde{\alpha}(d/2,d)=\frac{2}{d}\binom{d-1}{d/2-1}(d+2)^{d/2}\omega_{d-1})$.
\end{proof}
From another point of view, in \cite{TiWa}, Tian-Wang proved the following Moser-Trudinger inequality for $k=d/2$:
\[
\int_{\Omega} \exp \left(D\left(\frac{u}{\|u\|_{\Phi_0^k}}\right)^{p_0} \right)\le C.
\]
with
\[
\|u\|_{\Phi_0^k}=\left(\int_{\Omega}-u S_{k}(u_{\alpha\beta})\right)^{1/(k+1)}.
\]
\[
D=d\left[\frac{\omega_{d-1}}{k}\binom{d-1}{k-1}\right]^{2/d},\quad p_0=\frac{d+2}{d}.
\]
If we let $x=u/\|u\|_{\Phi_0^k}$ and $y=\|u\|_{\Phi_0^k}$ and use the inequality 
\[
xy\le D x^{p_0}+ E y^{q_0}, \mbox{ with } q_0=\frac{d}{2}+1, E=(D p_0)^{-q_0/p_0}q_0^{-1}=\left[(d+2)^{d/2}\frac{\omega_{d-1}}{k}\binom{d-1}{k-1}\frac{d+2}{2}\right]^{-1}.
\]
we get the Moser-Trudinger-Onofri inequality:
\[
-(E(d/2+1))^{-1}\log\left(\int_{\Omega}\exp(-u) dV\right)\le \frac{1}{k+1}\int_{\Omega}-u S_{d/2}(u_{\alpha\beta})dV+C.
\]
This implies when $0<a<E(k+1)^{-1}$, there exists a solution to \eqref{rHLBG}. Now note that we indeed have: (k=d/2)
\[
\tilde{\alpha}(d/2)=(E(k+1))^{-1}=(d+2)^{d/2}\frac{2}{d}\binom{d-1}{k-1}\omega_{d-1}.
\]
\end{rem}

\section{Radially symmetric solutions}
\subsection{Reduction in the radially symmetric case}

In this section, we assume $\Omega=B_R$ and $u(z)=u(s)$ is radially symmetric, where $s=r^2=|z|^2$. Then we can calculate that 
\[
u_{i\bar{j}}=u_s\delta_{ij}+U_{ss} \bar{z}^i z^j.
\]
By the unitary invariance of operator $S_k$, we get
\begin{eqnarray*}\label{radSk}
S_k(u_{l\bar{m}})&=&\binom{n-1}{k}u_s^k+\binom{n-1}{k-1}u_s^{k-1}(u_s+u_{ss}s)\\
&=&\frac{1}{k}\binom{n-1}{k-1}(u_s^k s^n)_ss^{1-n}.
\end{eqnarray*}
So the radially symmetric solution to \eqref{chess} satisfies the equation:
\begin{equation}\label{radchess}
\frac{1}{k}\binom{n-1}{k-1}(u_s^k s^n)_ss^{1-n}=(-u)^p, \quad u(R)=0.
\end{equation}
The Hessian energy becomes
\begin{eqnarray*}
\mathbb{H}_k=-\frac{1}{k+1}\int_{\Omega}u S_k(u_{l\bar{m}})dV=\frac{\omega_{2n-1}}{2k(k+1)}\binom{n-1}{k-1}\int_0^R u_s^{k+1} s^nds.
\end{eqnarray*}
so the functional whose Euler-Lagrange equation is \eqref{radchess} becomes
\[
\mathbb{F}_k=\frac{A}{k+1} \int_0^R |u_s|^{k+1} s^n ds-\frac{B}{p+1}\int_0^R |u|^{p+1}s^{n-1}ds
\]
where
\begin{equation}\label{defAB}
A=A(k,n)=\frac{\omega_{2n-1}}{2k}\binom{n-1}{k-1}, \quad B=B(k,n)=\frac{\omega_{2n-1}}{2}.
\end{equation}
As in \cite{Tso1}, denote $
\mathcal{E}=\{u\in C^1([0,R]); u(R)=0\}$. For any $1\le k\le n$ and $0<\delta<\gamma(k,n)=\frac{(n+1)k}{n-k}$, 
and let $\mathcal{W}_k$ be the completion of $\mathcal{E}$ under the norm
\[
\|u\|=\left(\int_0^R u_s^{k+1} s^{n} ds\right)^{1/(k+1)}.
\]
\begin{lem}
There exists a constant $C=C(\delta, k, R, n)$ such that, for all $u\in E$,
\[
\left(\int_0^R |u|^{\delta+1} s^{n-1} ds\right)^{1/(\delta+1)}\le C \left(\int_0^R |u_s|^{k+1} s^n\right)^{1/(k+1)}.
\]
\end{lem}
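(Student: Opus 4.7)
The plan is to prove this weighted Sobolev-type inequality via a pointwise bound on $u$ derived from the boundary condition, together with H\"older's inequality, as in Tso's original treatment of the real case. Since $u(R)=0$, for every $s\in(0,R]$ we have
\[
|u(s)| \le \int_s^R |u'(t)|\,dt = \int_s^R \bigl(|u'(t)|\,t^{n/(k+1)}\bigr)\,t^{-n/(k+1)}\,dt,
\]
and H\"older's inequality with conjugate exponents $k+1$ and $(k+1)/k$ gives
\[
|u(s)| \le \left(\int_0^R |u'(t)|^{k+1} t^n\,dt\right)^{1/(k+1)} \left(\int_s^R t^{-n/k}\,dt\right)^{k/(k+1)}.
\]

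The next step is to evaluate the $s$-dependent factor explicitly. For $k<n$ one has $\int_s^R t^{-n/k}\,dt \le \frac{k}{n-k}\,s^{(k-n)/k}$, so the pointwise estimate becomes $|u(s)| \le C\,\|u'\|_{L^{k+1}(s^n ds)}\, s^{(k-n)/(k+1)}$; for $k=n$ the factor is instead $(\log(R/s))^{k/(k+1)}$. Raising to the power $\delta+1$ and integrating against $s^{n-1}\,ds$ reduces the claim to the finiteness of $\int_0^R s^{(k-n)(\delta+1)/(k+1)+n-1}\,ds$ (respectively, to integrability of a mild logarithmic singularity) on $[0,R]$.

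Finally, I would verify that the integrability condition at $s=0$ is exactly the subcritical assumption $\delta<\gamma(k,n)$. A direct computation shows that $(k-n)(\delta+1)/(k+1)+n-1>-1$ rearranges to $\delta+1<n(k+1)/(n-k)$, i.e.\ $\delta<(n+1)k/(n-k)=\gamma(k,n)$; when $k=n$ the critical exponent is infinite and the logarithmic factor is integrable against $s^{n-1}\,ds$ for any $\delta>0$. The proof presents no real obstacle: the only care required is to track the exponents through the H\"older splitting and confirm that the threshold forced by integrability at the origin coincides with $\gamma(k,n)$. Conceptually, via the change of variable $t=s^{k/(k+1)}$ the estimate becomes the radial Sobolev embedding for the $k$-Hessian energy in effective dimension $D=n(k+1)/k$, with $\gamma(k,n)$ being the corresponding Sobolev critical exponent; the direct Hardy-type argument above avoids invoking weighted Sobolev spaces with fractional dimensions.
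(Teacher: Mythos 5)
Your proof is correct and follows essentially the same route as the paper: the pointwise bound $|u(s)|\le C\,s^{-(n-k)/(k+1)}\bigl(\int_0^R |u_s|^{k+1}s^n\,ds\bigr)^{1/(k+1)}$ obtained from $u(s)=\int_R^s u_s\,dt$ and H\"older, followed by raising to the power $\delta+1$, multiplying by $s^{n-1}$, and checking that integrability at $s=0$ is exactly the condition $\delta<\gamma(k,n)$. Your explicit handling of the borderline case $k=n$ via the logarithmic factor is in fact slightly more careful than the paper's one-line statement, but it is the same argument.
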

\begin{proof}
By applying H\"{o}lder's inequality to $u(s)=\int_R^s u_s(s)ds$, we have
\[
|u(s)|\le C s^{-(n-k)/(k+1)}\left(\int_0^R |u_s|^{k+1} s^n\right)^{1/(k+1)}.
\]
Then raising the $(\delta+1)$-th power, multiplying $s^{n-1}$ and integrating from $0$ to $R$ we get the inequality. The range for $\delta$ is determined by the inequality:
\[
-\frac{n-k}{k+1}(\delta+1)+n-1>-1.
\]
\end{proof}
\begin{rem}
By \cite{CKN}, when $k<n$, we actually have the sharp Sobolev inequalities of complex Hessian operator for radial functions,
\[
\left(\int_0^R |u|^{\gamma(k,n)+1} s^{n-1}\right)^{1/(\gamma(k,n)+1)}\le C\left(\int_0^R |u_s|^{k+1}s^n \right)^{1/(k+1)}.
\]
Since we don't have symmetrization process as in the real case, the sharp Sobolev inequalities for general k-plurisubharmonic functions are open (\cite{Zhou}).
\end{rem}
As in \cite{Tso1}, we define the notion of weak solution. We use the constants in \eqref{defAB}.
\begin{defn}
We say $u\in\mathcal{W}_k$ is a weak solution to equation \eqref{radchess}, if for every $\phi\in C^1([0,R])$ with $\phi(R)=0$, the following identity is satisfied.
\[
A\int_0^R |u_s|^{k} u_s \phi'(s) s^nds=B\int_0^R |u|^p \phi(s) s^{n-1}ds. 
\] 
\end{defn}
Arguing as in \cite[Lemma 4 ]{Tso1}, we get the following regularity result which reduces the problem to finding critical point of $\mathbb{F}_k$ on $\mathcal{W}_k$.
\begin{lem}[\cite{Tso1}]
Any generalized solution of \eqref{radchess} is in $C^2([0,R])$, and solves \eqref{radchess} in the classical sense. Moreover, it is negative in $[0,R)$ unless it
vanishes identically.
\end{lem}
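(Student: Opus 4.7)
The strategy is to adapt the bootstrap argument of \cite[Lemma 4]{Tso1} to the present radial reduction, the key observation being that \eqref{radchess} has exactly the same one-dimensional structure as the real $k$-Hessian case once the radial substitution is made. The plan has three stages: pass from the weak formulation to a pointwise integral equation, bootstrap to $C^2$ regularity, and establish the strong negativity.

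First, starting from a weak solution $u\in\mathcal{W}_k$, I would choose test functions $\phi$ supported in small intervals $[s_1,s_2]\subset(0,R)$ and use a standard duality/Fubini argument to convert
\[
A\int_0^R |u_s|^k u_s\,\phi'(s)\,s^n\,ds = B\int_0^R |u|^p\phi(s)\,s^{n-1}\,ds
\]
into the identity
\[
A\,|u_s|^k u_s\,s^n = -B\int_0^s |u(t)|^p\, t^{n-1}\,dt + C
\]
holding a.e.\ in $s$. The previous Sobolev-type lemma (applied with some admissible $\delta$) shows that $u$ is bounded, so the right-hand side integral is absolutely continuous. To identify $C=0$ I would show that $|u_s|^k u_s s^n\to 0$ as $s\to 0^+$: the finiteness of the $\mathcal{W}_k$-norm forces $\int_0^R |u_s|^{k+1}s^n\,ds<\infty$, and an elementary measure-theoretic argument (Cauchy-Schwarz applied on a shrinking interval, as in Tso) rules out the persistence of a nonzero limit. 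This immediately yields the pointwise identity
\[
|u_s|^{k+1} s^n = \frac{B}{A}\int_0^s |u(t)|^p\, t^{n-1}\,dt,
\]
which in particular forces $u_s\le 0$ almost everywhere.

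Second, given this integral identity, the right-hand side is a continuous, non-decreasing function of $s$, so $|u_s|^{k+1}$ is continuous away from $s=0$, hence so is $u_s$ (with the sign determined by $u_s\le 0$). This upgrades $u$ to $C^1$ on $(0,R]$. Differentiating the identity, using that $|u|^p t^{n-1}$ is continuous, shows that $|u_s|^k u_s\, s^n$ is actually $C^1$, and together with $u_s$ being continuous and (by the sign argument) nonzero wherever $u\not\equiv 0$ immediately to its left, this gives $u_s\in C^1$, i.e.\ $u\in C^2((0,R])$ solving \eqref{radchess} classically. The behavior at $s=0$ is handled by noting that $|u_s|^{k+1}s^n=O(s)$ near $0$, hence $u_s(0^+)=0$, and regularity at the origin follows from the explicit integral formula.

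Finally, for the negativity claim, suppose $u$ is a nontrivial solution. Because $u_s\le 0$ and $u(R)=0$, $u$ is nonpositive on $[0,R]$. If $u(s_0)=0$ for some $s_0\in[0,R)$, monotonicity forces $u\equiv 0$ on $[s_0,R]$; then the classical ODE \eqref{radchess} gives $(u_s^k s^n)_s=0$ on $[s_0,R]$, and continuing this argument backward via uniqueness for the first-order equation derived from the integral identity propagates $u\equiv 0$ all the way to $0$, contradicting nontriviality. The main obstacle is the first step: verifying that the boundary constant $C$ at $s=0$ must vanish despite the degeneracy of the weight $s^n$. This is precisely the point where the finite-energy condition $u\in\mathcal{W}_k$ is used, and the argument is the complex analogue of Tso's original step; no new ingredient beyond the radial computation of $S_k(u_{i\bar j})$ already recorded above is required.
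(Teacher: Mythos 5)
Your overall plan --- convert the weak formulation into a pointwise integral identity, show the constant of integration vanishes using finiteness of the $\mathcal{W}_k$-energy, bootstrap to $C^2([0,R])$, and then read off the sign of $u$ --- is exactly the route the paper intends: its proof of this lemma is a one-line appeal to Tso's Lemma~4, and your outline reconstructs that argument. However, as written your argument contains a sign error that breaks the negativity conclusion. From your identity $A|u_s|^k u_s\,s^n=-B\int_0^s|u|^p t^{n-1}dt$ you conclude $u_s\le 0$ a.e., and then assert that $u_s\le 0$ together with $u(R)=0$ makes $u$ nonpositive. The implication is backwards: a non-increasing function vanishing at $s=R$ is \emph{nonnegative} on $[0,R)$, so your chain produces $u\ge 0$, contradicting both the negativity you are supposed to prove and the classical equation \eqref{radchess}, whose right-hand side $(-u)^p\ge 0$ forces $(u_s^k s^n)_s\ge 0$ and hence $u_s\ge 0$ for a nonpositive solution. (The displayed weak formulation in the paper has its own sign and exponent typos; the intended convention, as in Tso, is the Euler--Lagrange equation of $\mathbb{F}_k$, for which the pointwise identity reads $u_s^k s^n=\frac{B}{A}\int_0^s(-u)^p t^{n-1}dt\ge 0$, giving $u_s\ge 0$ and $u\le 0$; with these signs your propagation argument does yield strict negativity on $[0,R)$ unless $u\equiv 0$.) You should have noticed the inconsistency between your derived sign and the statement being proved.

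Two further steps need repair. First, the claim that $u$ is bounded ``by the previous Sobolev-type lemma'' is unjustified: that lemma's proof only gives $|u(s)|\le C s^{-(n-k)/(k+1)}\|u\|$, which is unbounded near $s=0$ when $k<n$ (and only logarithmically controlled when $k=n$); boundedness and continuity at $s=0$ must instead be extracted from the integral identity by a bootstrap, where the restriction $p<\gamma(k,n)$ guarantees $\int_0^s(-u)^p t^{n-1}dt<\infty$ at the first step. Second, your assertion that $u_s(0^+)=0$ is false in general: the identity gives $u_s(s)^k\to\frac{B}{nA}(-u(0))^p$ as $s\to 0^+$, which is typically nonzero (compare the explicit solutions $u_\epsilon$ of the Monge--Amp\`ere case, for which $u_s(0)=(n+1)/\epsilon^2$); what one actually shows is that $u_s$ and $u_{ss}$ extend continuously to $s=0$ directly from the integral formula, which is what membership in $C^2([0,R])$ requires. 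With these corrections the proposal follows the paper's (i.e.\ Tso's) argument faithfully.
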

\begin{proof}[Part II of Proof of Theorem \ref{critCHE}]
When $p<k$, we are in the sub-linear (with respect to complex k-Hessian operator) case, by the Sobolev inequality, we have
we have
\[
\int_0^R |u|^{p+1}s^{n-1}ds \le C(p)\left(\int_{0}^R |u_s|^{k+1} s^n ds\right)^{(p+1)/(k+1)}\le \epsilon\int_0^R |u_s|^{k+1}s^n ds+C(\epsilon,p).
\]
Then by taking $\epsilon$ sufficiently small, we get
\[
\mathbb{F}_k\ge \epsilon \int_0^R|u_s|^{k+1}s^nds -C(\epsilon, p).
\]
So 
the functional $\mathbb{F}_k$ is a coercive functional on $\mathcal{W}_k$ and one can use the direct method in variational calculus to find an absolute minimizer. On the other hand,
it's easy to see that
\[
\mathbb{F}_k(tu)=O(t^{k+1})-O(t^{p+1})< 0, \mbox{ as } t\ll 1.
\]
So the absolute minimizer is not $0$.

In the super-linear case, i.e. when $k<p<\gamma(k)$, we have
\begin{enumerate}
\item
$\mathbb{F}_k(0)=0$, and $\mathbb{F}_k(tu)=O(t^{k+1})-O(t^{p+1})\rightarrow -\infty$ as $t\rightarrow +\infty$.
\item Choose $\alpha$ sufficiently small, then when $\|u\|=\alpha$
\[
\mathbb{F}_k(u)\ge \|u\|-C(p)\|u\|^{(p+1)/(k+1)}=\|u\|\left(1-C(p)\|u\|^{\frac{p-k}{k+1}}\right)=\alpha\left(1-C(p)\alpha^{\frac{p-k}{k+1}}\right)>0.
\]
\end{enumerate}
So $\mathbb{F}_k$ satisfies the Montain Pass condition. Now as in the semi-linear case, it's known that under the assumption, $\mathbb{F}_k$ is in $C^1(\mathcal{W}_k,\mathbb{R})$ and satisfies the Palais-Smale condition. So the minimax method proves the existence of critical point of $\mathbb{F}_k$ on $\mathcal{W}_k$. For details, see \cite{Rab}.  
\end{proof}

\subsection{Nonlocal problem with exponential nonlinearity}

Denote $s=|z|^2$. Assume $u=u(s)$ is any radial symmetric solution of \eqref{cHLBG}. Then by \eqref{radSk}, we see that \eqref{cHLBG} is reduced to 
the following equation for $u$:
\begin{equation}\label{radLBGEQ}
(u_s^ks^n)_ss^{1-n}=\lambda e^{-u}, \quad \lambda=\frac{2k}{\binom{n-1}{k-1}\omega_{2n-1}}\frac{a}{\int_{0}^1e^{-u(s)}s^{n-1}ds}=A(k,n)^{-1}\frac{a}{\int_0^1 e^{-u(s)}s^{n-1}ds}.
\end{equation}
We use the phase plane method to study this equation. Define
\[
v=\left(\frac{1}{k}u_ss\right)^k,\quad w=\lambda k^{-k} s^k e^{-u}.
\]
Introduce a new variable $t=\log s$. Then it's easy to verify \eqref{radLBGEQ} is equivalent to the following system of equations:
\begin{equation}\label{ODEsys}
v_{t}=-(n-k)v+w,\quad w_{t}=kw(1-v^{1/k}).
\end{equation}
For the boundary condition, when $r=-\infty$, or equivalently $s=0$.
\[
v(-\infty)=0=w(-\infty). 
\]
To find the boundary condition when $t=0$, or equivalently $s=1$, we note that
\[
\int_{B_{\sqrt{s}}}\det(u_{l\bar{m}})dV=\frac{1}{k}\binom{n-1}{k-1}\frac{\omega_{2n-1}}{2}\int_0^{s}(u_s^ks^n)_s ds=A(k,n) u_s^k s^n.
\]
So 
\[
v(t=0)=k^{-k}A(k,n)^{-1}\int_{B_1}\det(u_{l\bar{m}})dV=k^{-k}A(k,n)^{-1}a.
\]
while $w(t=0)=\lambda k^{-k}$.  So we are looking for the trajectory from $(0,0)$ to the point $(k^{-k}A(k,n)^{-1}a,\lambda k^{-k})$. The critical point of 
system \eqref{ODEsys} is $(1, (n-k))$. The Hessian matrix is
\[
\left.\left(\begin{array}{cc}
-(n-k)& 1\\
-w v^{(1-k)/k}&k(1-v^{1/k})
\end{array}\right)\right|_{(1,(n-k))}=
\left(\begin{array}{cc}
k-n& 1\\ -(n-k)&0
\end{array}\right).
\]
whose trace and determinant are
\[
{\rm tr}=k-n,\quad \det=n-k.
\]
So the two eigenvalue is
\[
\beta_1=\frac{k-n+ \sqrt{(n-k)^2-4(n-k)}}{2}, \quad \beta_2=\frac{k-n- \sqrt{(n-k)^2-4(n-k)}}{2}.
\]
There are two complex eigenvalue with negative real part if and only if
\[
0<n-k<4.
\]
Now we can prove Theorem \ref{RadExist} using similar analysis as in \cite{BHN} (see also \cite{JoLu} and \cite{JaSc}). 
\begin{proof}[Proof of Theorem \ref{RadExist}]
When $n=k$, the equation is integrable. $u=(n+1)[\log (s+\epsilon^2)-\log(1+\epsilon^2)]$. 
\[
v(s)=\left(\frac{1}{n}u_s s\right)^k=\left(\frac{n+1}{n}\right)^{n}\left(\frac{s}{s+\epsilon^2}\right)^n, \quad w(s)=\frac{(n+1)^n}{n^{n-1}}\frac{\epsilon^2 s^n}{(s+\epsilon^2)^{n+1}}.
\]
So there is a trajectory $\mathcal{O}$ connecting $(0,0)$ to the point $((\frac{n+1}{n})^n,0)$ and  $a_{\epsilon}=n^n v(t=0) C(n,n)=(n+1)^n\frac{\pi^n}{n!(1+\epsilon^2)^n}$ lies in $(0,a_0=(n+1)^n\frac{\pi^n}{n!})$.

When $k<n$, consider the function defined by
\[
L(v,w)=k\left(\frac{k}{k+1}v^{(k+1)/k}-v+\frac{1}{k+1}\right)+(w-(n-k))-(n-k)\log\frac{w}{(n-k)}.
\]
Then it's easy to verify that $L(1,n-k)=0$ and $L(v,w)>0$ for $\mathbb{R}_+^2\ni (v,w)\neq (1,n-k)$. Moreover, if $(v(t),w(t))$ is a trajectory for the system \eqref{ODEsys},
then 
\[
\frac{d}{dt}L(v(t),w(t))=-(n-k)k(v^{1/k}-1)(v-1)\le 0, \mbox{ and } <0 \mbox{ when } v\neq 1.
\]
So $L(v,w)$ is a Lyapunov function for the system \eqref{ODEsys}. So we conclude that the basin of attraction of $(1,n-k)$ contains the whole positive quadrant. The solution to \eqref{radLBGEQ} corresponds to a trajectory $\tilde{\mathcal{O}}$ connecting $(0,0)$ to $(v(t=0),w(t=0))$.
\begin{enumerate}
\item
When $n-k<4$, ${\rm Im}(\beta_{1,2})\neq 0$ and ${\rm Re}(\beta_{1,2})<0$. There is a trajectory $\mathcal{O}$ connecting $(0,0)$ and $(1,n-k)$, which turns around $(1,n-k)$ infinitely many times. 
In particular, the line $v=1$ intersects with $\mathcal{O}$ at infinitely many points.  This behavior of $\mathcal{O}$ clearly implies part 2 of Theorem \ref{RadExist}.
\item
When $n-k\ge 4$, we consider the region $\mathcal{D}$ bounded by the curves $\mathcal{C}=\{w=(n-k)v^b\}$ and $w=(n-k)v$. 

{\bf Claim:} When $(-\beta_2)^{-1}\le b\le(-\beta_1)^{-2}$, the region is invariant under the system \eqref{ODEsys}. 

{\bf Proof of the claim:} We just need to show the vector field on the boundary of the region points to the interior of the region. For the boundary $w=(n-k)v$ this is clear since the vector field has direction $\langle 0,1\rangle$. For the boundary $w=(n-k)v^b$, we parametrize it by $\{v=\tau, w=(n-k)\tau^b; 0\le \tau\le 1\}$. For $0<\tau<1$, the vector field points to the interior if and only if
\begin{eqnarray*}\label{invcond}
&&\frac{kw(1-v^{1/k})}{-(n-k)v+w}=\frac{k(n-k)t^b(1-t^{1/k})}{-(n-k)\tau+(n-k)\tau^b}<b(n-k)\tau^{b-1}\nonumber\\
&\Longleftrightarrow& h(\tau):=k(1-\tau^{1/k})-b(n-k)(\tau^{b-1}-1)<0\nonumber.
\end{eqnarray*}
\[
h(0)=-\infty, h(1)=0, h'(\tau)=\tau^{b-2}((n-k)b(1-b)-\tau^{\frac{1}{k}+1-b}).
\]
So if $h(\tau)$ is increasing, i.e. $h'(\tau)>0$ when $\tau\in (0,1)$, then \eqref{invcond} holds. Now $h'(\tau)> h'(1)=(n-k)b(1-b)-1$. It's easy to see that
\[
h'(1)\ge 0 \Longleftrightarrow (-\beta_2)^{-1}\le b\le (-\beta_1)^{-1}.
\]
So we can just choose the curve $\mathcal{C}=\{w=(n-k)v^{-1/\beta_1}\}$. Now it's easy to see that $\mathcal{O}$ lies in the region $\mathcal{D}$. Since $\mathcal{D}$ is above the curve $w=(n-k)v$, so $v'(t)\ge 0$ along $\mathcal{O}$. This implies for any $0<v(t=0)\le 1$, or equivalently, when $0<a\le k^k C(n,k) v(t=0)=k^{k-1}\binom{n-1}{k-1}\frac{\pi^n}{(n-1)!}$, there exists a unique solution to \eqref{radLBGEQ}.
\end{enumerate}
\end{proof}
\begin{figure}[h]
  \begin{center}
  \subfigure[$n=6, k=6$]{\label{zerodif}\includegraphics[height=4.5cm]{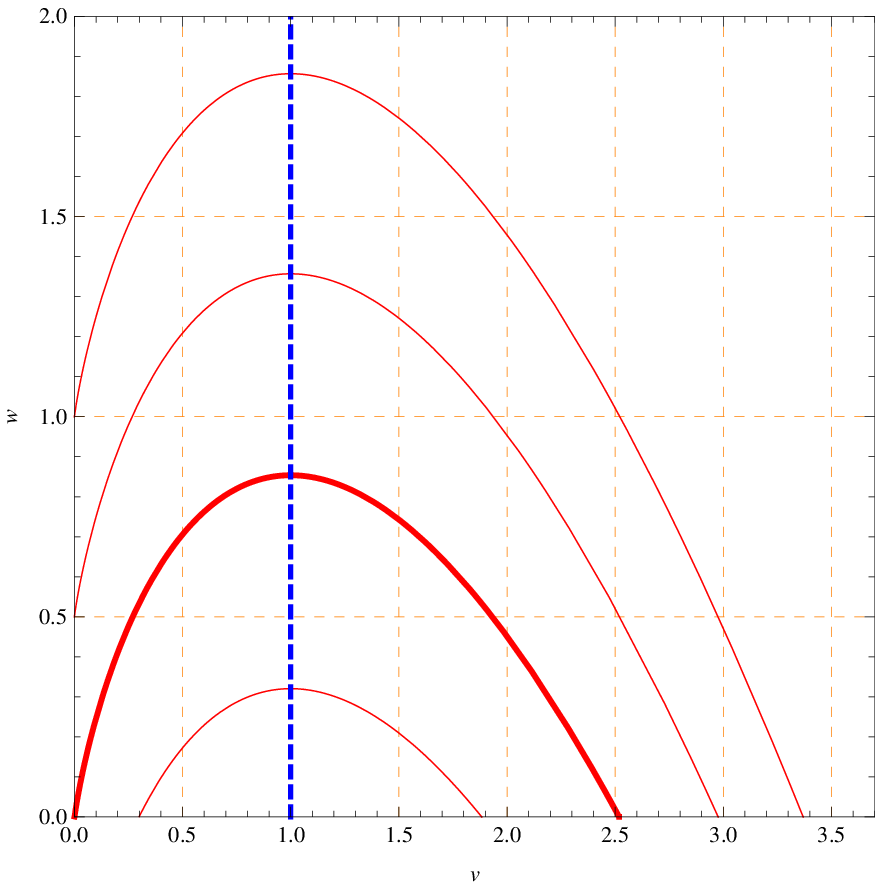}}
    \subfigure[$n=6,k=1$]{\label{bigdif}\includegraphics[height=4.5cm]{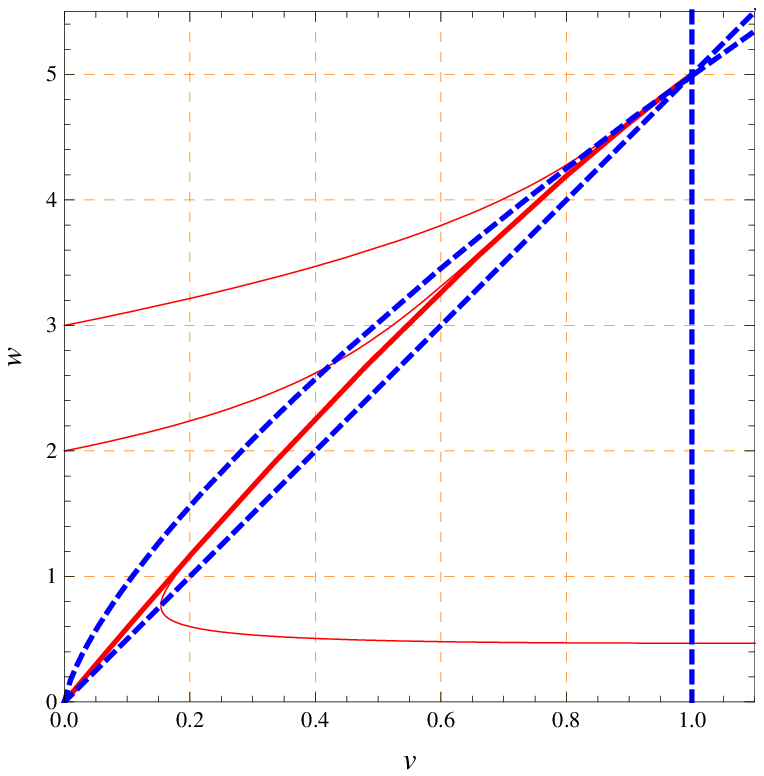}}
    \subfigure[$n=6,k=5$]{\label{smalldif}\includegraphics[height=4.5cm]{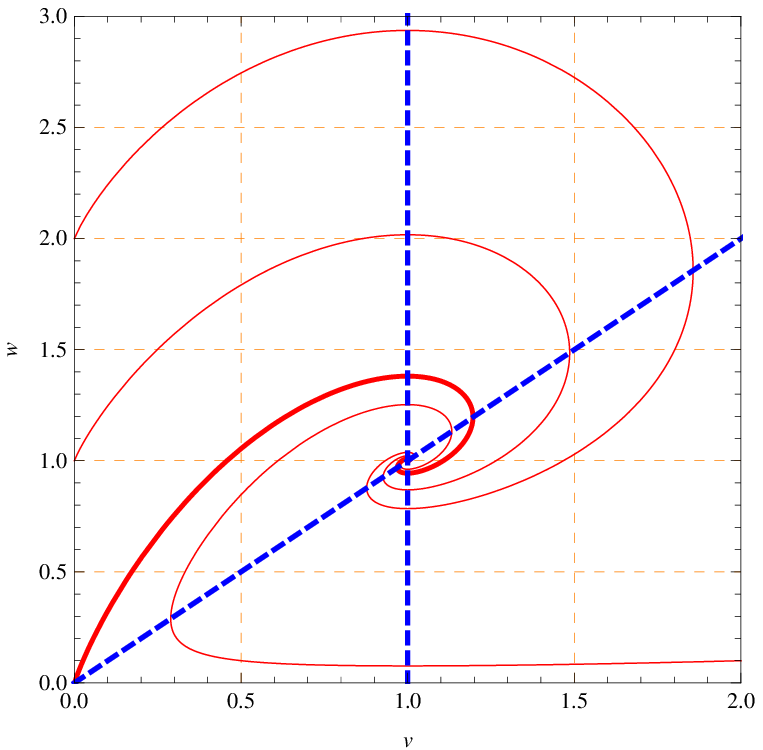}}   
  \end{center}
  \caption{Phase diagrams for system \eqref{ODEsys} when $n=6$}
  \label{phase}
\end{figure}
In figure \ref{phase}, we give phase diagrams in three cases of the above proof when $n=6$.
\begin{rem}\label{regionmodify}
In the case where $n-k\ge 4$, define the line $\mathcal{L}$ to be one characteristic line of the system: $(n-k)(v-1)+\beta_1(w-(n-k))=0$. Note that the curve $\mathcal{C}=\{w=v^{-1/\beta_1}\}$ is tangent to $\mathcal{L}$.
In \cite{BHN}, the region was chosen to be a triangle bounded by $\mathcal{L}$, $v=0$ and $w=(n-k)v$. But one can verify that, for some choices of $(n,k)$ for complex Hessian equation this triangle is not invariant under the flow. So it's more natural to consider the above invariant region $\mathcal{D}$ when one deals with general Hessian case. 
\end{rem}


\noindent
Mathematics Department, Stony Brook University, Stony Brook NY, 11794-3651, USA \\
Email: chi.li@stonybrook.edu

\end{document}